\documentclass{amsart}

\usepackage{amssymb,amsmath,amscd,graphicx,amsthm}

\newtheorem{theorem}{Theorem}
\newtheorem{lemma}[theorem]{Lemma}
\newtheorem{proposition}[theorem]{Proposition}

\newtheorem{conjecture}[theorem]{Conjecture}

\theoremstyle{definition}

\theoremstyle{remark}
\newtheorem{remark}[theorem]{Remark}

\numberwithin{equation}{section}
\numberwithin{theorem}{section}

\def\A{{\mathcal A}}
\def\AA{{\mathbb A}}

\def\C{{\mathbb C}}
\def\CC{{\mathcal C}}

\def\FF{{\mathcal F}}
\def\FFF{{\mathcal F}}
\def\G{{\mathcal G}}
\def\H{\mathcal H}

\def\M{{\mathcal M}}
\def\N{\mathcal N}
\def\O{{\mathcal O}}
\def\P{{\mathcal P}}
\def\PP{{\mathcal P}}

\def\TE{{\mathcal T}}

\def\UU{\overline{\A}}

\def\Z{{\mathbb Z}}

\def\d{\mathbf d}

\def\g{\mathfrak g}

\def\h{\mathfrak h}
\def\ii{{\bf i}}

\def\wB{{\widetilde{B}}}

\def\wx{{\widetilde{\bf x}}}
\def\x{{\bf x}}

\def\Ad{\operatorname{Ad}}

\def\Id{{\operatorname {Id}}}

\def\Poi{{\{\cdot,\cdot\}}}

\def\diag{\operatorname{diag}}
\def\dim{\operatorname{dim}}

\def\rank{\operatorname{rank}}

\def\sign{{\operatorname{sign}}}

\def\:{{:\ }}

\begin{document}

\title[ Cluster structures and Belavin-Drinfeld classification]
{Cluster structures on simple complex Lie groups and Belavin-Drinfeld classification}

\author{M. Gekhtman}

\address{Department of Mathematics, University of Notre Dame, Notre Dame,
IN 46556}
\email{mgekhtma@nd.edu}

\author{M. Shapiro}
\address{Department of Mathematics, Michigan State University, East Lansing,
MI 48823}
\email{mshapiro@math.msu.edu}

\author{A. Vainshtein}
\address{Department of Mathematics \& Department of Computer Science, University of Haifa, Haifa,
Mount Carmel 31905, Israel}
\email{alek@cs.haifa.ac.il}

\begin{abstract} 
We study natural cluster structures in the rings of regular functions on simple complex Lie groups and Poisson-Lie
structures compatible with these cluster structures.
According to our main conjecture, each class in the Belavin-Drinfeld classification of Poisson-Lie structures on $\G$ 
corresponds to a cluster structure in $\O(\G)$. We prove a reduction theorem explaining how different parts of the conjecture are 
related to each other. The conjecture is established for $SL_n$, $n<5$,  and for any $\G$ in the case of the standard Poisson-Lie structure.
\end{abstract}

\subjclass[2000]{53D17, 13F60}
\keywords{Poisson-Lie group,  cluster algebra, Belavin-Drinfeld triple}

\maketitle

\section{Introduction}

Since the invention of cluster algebras in 2001, a large part of
research in the field has been devoted to uncovering cluster 
structures in rings of regular functions on various algebraic varieties
arising in algebraic geometry, representation theory, and
mathematical physics. Once the existence of such a structure was
established, abstract features of cluster algebras were used to
study geometric properties of underlying objects.  Research in this
direction led to many exciting results \cite{SSVZ, FoGo1, FoGo2}. It also created
an impression that, given an algebraic variety, there is a unique (if at all) natural
cluster structure associated with it.

The main goal of the current paper is to establish the following phenomenon:
in certain situations,  the same ring may have \emph{multiple\/} natural cluster structures. 
More exactly, we engage into a systematic study of
multiple cluster structures in the rings of regular functions
on simple Lie groups (in what follows we will shorten that to {\it
cluster structures on simple Lie groups\/}).
Consistent with the philosophy advocated in \cite{GSV1,GSV2, GSV3, GSV4, GSV5, GSVb}, we will focus 
on compatible Poisson structures on the Lie groups,
that is, on compatible Poisson-Lie structures.

The notion of a Poisson bracket compatible with a cluster structure was introduced in
\cite{GSV1}. It was used there to interpret cluster transformations and matrix mutations from
a viewpoint of Poisson geometry. In addition, it was shown that if a 
Poisson algebraic variety $\left ( \mathcal{M}, \Poi\right )$ possesses a  coordinate chart
that consists of regular functions whose logarithms have pairwise constant Poisson brackets, 
then one can use this chart to define a cluster structure $\CC_\M$ compatible with $\Poi$. Algebraic 
structures corresponding to $\CC_\M$ (the cluster algebra and the upper cluster algebra)
are closely related 
to the ring $\O(\M)$ of regular functions on $\mathcal{M}$. 
More precisely, under certain rather mild conditions, $\O(\M)$ can be obtained by tensoring one of these
algebras by $\C$.

This construction was applied in \cite[Ch.~4.3]{GSVb} to double Bruhat cells in semisimple Lie groups
equipped with (the restriction of) the {\em standard\/} Poisson-Lie structure. It was shown that
the resulting cluster structure coincides with the one built in \cite{CAIII}. Recall that it was proved in
\cite{CAIII} that the corresponding upper cluster algebra coincides with the ring of regular functions on the
double Bruhat cell. Since the open double Bruhat cell is dense in the corresponding Lie group, 
the corresponding fields of rational functions coincide, thus allowing to equip  the field of rational functions on the  
Lie group with the same cluster structure. 
Moreover, we show below that the upper cluster algebra coincides with the ring of regular functions on the Lie group.

The standard Poisson-Lie structure is a particular case of Poisson-Lie structures corresponding to quasi-triangular
Lie bialgebras. Such structures are associated with solutions to the classical Yang-Baxter equation (CYBE).
Their complete classification was obtained by Belavin and Drinfeld in \cite{BD}. We conjecture that any such solution
 gives rise to a compatible cluster structure on the Lie group, and that the properties of this structure are similar 
 to those mentioned above. The detailed formulation of our conjectures requires some preliminary work; it is given in Section~\ref{SecMC} below. In Section~\ref{reduction} we study interrelations between the different parts of the conjecture.
 Currently, we have several examples supporting our conjecture: it holds for the class of the standard Poisson-Lie
  structure in any simple complex Lie group, and for 
 the whole Belavin-Drinfeld classification in $SL_n$ for $n=2,3,4$.  These results are described in Sections~\ref{Secgen} and~\ref{Sec34}, respectively. 
 In Section~\ref{SecTri} we
 discuss the case of Poisson-Lie structures beyond those associated with solutions to CYBE.

\section{Cluster structures and compatible Poisson brackets}
\label{SecPrel}

\subsection{}
We start with the basics on cluster algebras of geometric type. The definition that we present
below is not the most general one, see, e.g.,
\cite{FZ2, CAIII} for a detailed exposition. In what follows, we will use a notation $[i,j]$ for an interval
$\{i, i+1, \ldots , j\}$ in $\mathbb{N}$ and we will denote $[1, n]$ by $[n]$.
 
The {\em coefficient group\/} $\PP$ is a free multiplicative abelian
group of finite rank $m$ with generators $g_1,\dots, g_m$.
An {\em ambient field\/}  is
the field $\FFF$ of rational functions in $n$ independent variables with
coefficients in the field of fractions of the integer group ring
$\Z\PP=\Z[g_1^{\pm1},\dots,g_m^{\pm1}]$ (here we write
$x^{\pm1}$ instead of $x,x^{-1}$).

A {\em seed\/} (of {\em geometric type\/}) in $\FFF$ is a pair
$\Sigma=(\x,\widetilde{B})$,
where $\x=(x_1,\dots,x_n)$ is a transcendence basis of $\FFF$ over the field of
fractions of $\Z\PP$ and $\widetilde{B}$ is an $n\times(n+m)$ integer matrix
whose principal part $B=\wB([n],[n])$ is skew-symmetrizable (here and in what follows, we denote by $A(I,J)$ a submatrix of a matrix $A$ with a row set $I$ and a column set $J$). Matrices $B$ and $\wB$ are called the
{\it exchange matrix\/} and the {\it extended exchange matrix}, respectively.
In this paper, we will only deal with the case when the exchange matrix is skew-symmetric.

The $n$-tuple  $\x$ is called a {\em cluster\/}, and its elements
$x_1,\dots,x_n$ are called {\em cluster variables\/}. Denote
$x_{n+i}=g_i$ for $i\in [m]$. We say that
$\widetilde{\x}=(x_1,\dots,x_{n+m})$ is an {\em extended
cluster\/}, and $x_{n+1},\dots,x_{n+m}$ are {\em stable
variables\/}. It is convenient to think of $\FFF$ as
of the field of rational functions in $n+m$ independent variables
with rational coefficients. 

Given a seed as above, the {\em adjacent cluster\/} in direction $k\in [n]$
is defined by
$$
\x_k=(\x\setminus\{x_k\})\cup\{x'_k\},
$$
where the new cluster variable $x'_k$ is given by the {\em exchange relation}
\begin{equation}\label{exchange}
x_kx'_k=\prod_{\substack{1\le i\le n+m\\  b_{ki}>0}}x_i^{b_{ki}}+
       \prod_{\substack{1\le i\le n+m\\  b_{ki}<0}}x_i^{-b_{ki}};
\end{equation}
here, as usual, the product over the empty set is assumed to be
equal to~$1$.

We say that $\wB'$ is
obtained from $\wB$ by a {\em matrix mutation\/} in direction $k$
and
write $\wB'=\mu_k(\wB)$ 
 if
\[
b'_{ij}=\begin{cases}
         -b_{ij}, & \text{if $i=k$ or $j=k$;}\\
                 b_{ij}+\displaystyle\frac{|b_{ik}|b_{kj}+b_{ik}|b_{kj}|}2,
                                                  &\text{otherwise.}
        \end{cases}
\]
It can be easily verified that $\mu_k(\mu_k(\wB))=\wB$.

Given a seed $\Sigma=(\x,\widetilde{B})$, we say that a seed
$\Sigma'=(\x',\widetilde{B}')$ is {\em adjacent\/} to $\Sigma$ (in direction
$k$) if $\x'$ is adjacent to $\x$ in direction $k$ and $\widetilde{B}'=
\mu_k(\widetilde{B})$. Two seeds are {\em mutation equivalent\/} if they can
be connected by a sequence of pairwise adjacent seeds. 
The set of all seeds mutation equivalent to $\Sigma$ is called the {\it cluster structure\/} 
(of geometric type) in $\FFF$ associated with $\Sigma$ and denoted by $\CC(\Sigma)$; in what follows, 
we usually write $\CC(\wB)$, or even just $\CC$ instead. 

Following \cite{FZ2, CAIII}, we associate
with $\CC(\wB)$ two algebras of rank $n$ over the {\it ground ring\/} $\AA$, $\Z\subseteq\AA \subseteq\Z\P$:
the {\em cluster algebra\/} $\A=\A(\CC)=\A(\wB)$, which 
is the $\AA$-subalgebra of $\FF$ generated by all cluster
variables in all seeds in $\CC(\wB)$, and the {\it upper cluster algebra\/}
$\UU=\UU(\CC)=\UU(\wB)$, which is the intersection of the rings of Laurent polynomials over $\AA$ in cluster variables
taken over all seeds in $\CC(\wB)$. The famous {\it Laurent phenomenon\/} \cite{FZ3}
claims the inclusion $\A(\CC)\subseteq\UU(\CC)$. The natural choice of the ground ring for the geometric type
is the polynomial ring in stable variables $\AA=\Z\P_+=\Z[x_{n+1},\dots,x_{n+m}]$; this choice is assumed unless
explicitly stated otherwise. 

Let $V$ be a quasi-affine variety over $\C$, $\C(V)$ be the field of rational functions on $V$, and
$\O(V)$ be the ring of regular functions on $V$. Let $\CC$ be a cluster structure in $\FF$ as above.
Assume that $\{f_1,\dots,f_{n+m}\}$ is a transcendence basis of $\C(V)$. Then the map $\varphi: x_i\mapsto f_i$,
$1\le i\le n+m$, can be extended to a field isomorphism $\varphi: \FF_\C\to \C(V)$,  
where $\FF_\C=\FF\otimes\C$ is obtained from $\FF$ by extension of scalars.
The pair $(\CC,\varphi)$ is called a cluster structure {\it in\/}
$\C(V)$ (or just a cluster structure {\it on\/} $V$), $\{f_1,\dots,f_{n+m}\}$ is called an extended cluster in
 $(\CC,\varphi)$.
Sometimes we omit direct indication of $\varphi$ and say that $\CC$ is a cluster structure on $V$. 
A cluster structure $(\CC,\varphi)$ is called {\it regular\/}
if $\varphi(x)$ is a regular function for any cluster variable $x$. 
The two algebras defined above have their counterparts in $\FF_\C$ obtained by extension of scalars; they are
denoted $\A_\C$ and $\UU_\C$.
If, moreover, the field isomorphism $\varphi$ can be restricted to an isomorphism of 
$\A_\C$ (or $\UU_\C$) and $\O(V)$, we say that 
$\A_\C$ (or $\UU_\C$) is {\it naturally isomorphic\/} to $\O(V)$.

The following statement is a weaker analog of Proposition~3.37 in \cite{GSVb}.

\begin{proposition}\label{regfun}
Let $V$ be a Zariski open subset in $\C^{n+m}$ and $(\CC=\CC(\wB),\varphi)$ be a cluster structure in $\C(V)$ 
with $n$ cluster and $m$ stable variables such that

{\rm(i)} $\rank\wB=n$;

{\rm(ii)} there exists an extended cluster $\wx=(x_1,\dots,x_{n+m})$ in $\CC$ such that $\varphi(x_i)$ is
regular on $V$ for $i\in [n+m]$;

{\rm(iii)} for any cluster variable $x_k'$, $k\in [n]$, obtained via the exchange relation~\eqref{exchange} 
applied to $\wx$, $\varphi(x_k')$ is regular on $V$.

{\rm(iv)} for any stable variable $x_{n+i}$, $i\in [m]$, $\varphi(x_{n+i})$ vanishes at some point of $V$;

{\rm(v)} each regular function on $V$ belongs to $\varphi(\UU_\C(\CC))$.

Then $\UU_\C(\CC)$ is naturally isomorphic to $\O(V)$.
\end{proposition}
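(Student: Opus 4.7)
Since $\varphi:\FF_\C\to\C(V)$ is an isomorphism, its restriction to $\UU_\C(\CC)$ is injective; and by~(v), this restriction has image containing $\O(V)$. It therefore suffices to prove the reverse inclusion $\varphi(\UU_\C(\CC))\subseteq\O(V)$, which then yields the claimed natural isomorphism.

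Let $f\in\UU_\C(\CC)$. Since $f$ is a Laurent polynomial in each cluster over the ground ring $\C[x_{n+1},\dots,x_{n+m}]$, the denominator in every such expansion involves only cluster (not stable) variables. Hypothesis~(ii) then gives that $\varphi(f)$ is regular on $V_0:=\{p\in V:\prod_{i=1}^n\varphi(x_i)(p)\ne 0\}$; for each $k\in[n]$, the expansion in the cluster $(\x\setminus\{x_k\})\cup\{x_k'\}$, combined with~(iii), shows regularity on $V_k:=\{p\in V:\varphi(x_k')(p)\ne 0\text{ and }\varphi(x_i)(p)\ne 0\text{ for }i\in[n]\setminus\{k\}\}$. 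A direct enumeration of cases shows that $V\setminus(V_0\cup\cdots\cup V_n)$ is contained in
\begin{equation*}
Z:=\bigcup_{1\le i<j\le n}\{\varphi(x_i)=\varphi(x_j)=0\}\cup\bigcup_{k=1}^n\{\varphi(x_k)=\varphi(x_k')=0\}.
\end{equation*}

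Since $V$ is open in $\C^{n+m}$, it is smooth and hence normal, and $\O(V)$ is a UFD (a localization of a polynomial ring). If $Z$ has codimension at least two in $V$, the algebraic Hartogs theorem extends $\varphi(f)$ to a regular function on $V$, completing the proof. The codimension-two bound for $Z$ is equivalent to coprimality in $\O(V)$ of each pair $(\varphi(x_i),\varphi(x_j))$ with $i\ne j\in[n]$, and each pair $(\varphi(x_k),\varphi(x_k'))$ with $k\in[n]$. The exchange relation $\varphi(x_k)\varphi(x_k')=M_++M_-$, with $M_\pm$ monomials in $\varphi(x_i)$'s for $i\ne k$ having disjoint supports, lets one derive the second coprimality from the first.

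\textbf{Main obstacle.} The principal difficulty is the pairwise coprimality of the initial cluster variables $\varphi(x_1),\dots,\varphi(x_n)$ in $\O(V)$. The full-rank hypothesis~(i), $\rank\wB=n$, should be essential: without it, distinct cluster variables could share a codimension-one zero component. I expect the proof to combine the exchange relations available from~(iii) with~(i) to preclude any shared irreducible divisor. Hypothesis~(iv) plays no direct role in the codimension argument but is required for consistency: it guarantees that each $\varphi(x_{n+i})$ is a non-unit in $\O(V)$, matching the non-invertibility of stable variables in $\UU_\C(\CC)$.
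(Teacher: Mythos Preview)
The paper does not supply a proof of this proposition; it is stated without argument as ``a weaker analog of Proposition~3.37 in \cite{GSVb}.'' So there is nothing in this paper to compare your proposal against directly. That said, your outline---one inclusion from~(v), the other via the Laurent expansions in the initial seed and its $n$ neighbours together with an algebraic Hartogs extension across a codimension-two locus---is exactly the standard route taken in \cite{GSVb}, and your identification of pairwise coprimality of the $\varphi(x_i)$ as the crux is correct.

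One point worth sharpening concerns the role of hypothesis~(i). You conjecture that the full-rank condition is what forces coprimality of the initial cluster functions. In the standard treatment this is not how~(i) enters: the Berenstein--Fomin--Zelevinsky upper-bound theorem (quoted in this very paper, in Section~5.2, as Theorem~3.21 of \cite{GSVb}) uses $\rank\wB=n$ to identify $\UU$ with the finite intersection of Laurent rings over the initial seed and its $n$ adjacent seeds. That identification is what one uses in practice to \emph{verify} hypothesis~(v)---and indeed the paper does precisely this in the $SL_3$ and $SL_4$ computations. For the inclusion $\varphi(\UU_\C)\subseteq\O(V)$ that you are trying to prove, the needed Laurent expansions are already available from the definition of $\UU$, so~(i) is not doing the work there. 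Coprimality in \cite{GSVb} is handled by a separate argument (typically via irreducibility of the $\varphi(x_i)$ combined with the algebraic independence of the extended cluster), not by the rank condition. Your reading of~(iv) is accurate: it rules out stable variables becoming units, which would obstruct the isomorphism since $x_{n+i}^{-1}\notin\UU_\C$ over the polynomial ground ring.
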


\subsection{}
Let $\Poi$ be a Poisson bracket on the ambient field $\FFF$, and $\CC$ be a cluster structure in $\FFF$. 
We say that the bracket and the cluster structure are {\em compatible\/} if, for any extended
cluster $\widetilde{\x}=(x_1,\dots,x_{n+m})$,  one has
\begin{equation}\label{cpt}
\{x_i,x_j\}=\omega_{ij} x_ix_j,
\end{equation}
where $\omega_{ij}\in\Z$ are
constants for all $i,j\in[n+m]$. The matrix
$\Omega^{\widetilde \x}=(\omega_{ij})$ is called the {\it coefficient matrix\/}
of $\Poi$ (in the basis $\widetilde \x$); clearly, $\Omega^{\widetilde \x}$ is
skew-symmetric.

A complete characterization of Poisson brackets compatible with a given cluster structure $\CC=\CC(\wB)$ in the case $\rank\wB=n$ is given in \cite{GSV1}, see also \cite[Ch.~4]{GSVb}. In particular, the following statement is an 
immediate corollary of Theorem~1.4  in \cite{GSV1}.

\begin{proposition}\label{Bomega}
Let $\rank \wB=n$, then
a Poisson bracket is compatible with $\CC(\wB)$ if and only if its coefficient matrix 
$\Omega^\wx$ satisfies $\wB\Omega^{\wx}=(D\; 0)$, where $D$ is a diagonal matrix.
\end{proposition}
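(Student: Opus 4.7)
The plan is to derive this as the advertised immediate corollary of Theorem~1.4 of \cite{GSV1}, which characterizes compatible Poisson brackets via a linear system on $\Omega^\wx$ when $\rank\wB=n$. I would organize the argument around a single-mutation computation, followed by a mutation-invariance check.

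For \emph{necessity}, I would compute $\{x_k',x_j\}$ for $k\in [n]$ and $j\in [n+m]\setminus\{k\}$ directly from the exchange relation \eqref{exchange} via Leibniz, using log-canonicity at $\wx$. Writing $M_+,M_-$ for the two monomials on the right-hand side of \eqref{exchange}, a short calculation yields
\[
\{x_k',x_j\} = \frac{x_j}{x_k}\bigl[(s^+_{kj}-\omega_{kj})M_+ + (s^-_{kj}-\omega_{kj})M_-\bigr],
\]
where $s^+_{kj}=\sum_{b_{ki}>0}b_{ki}\,\omega_{ij}$ and $s^-_{kj}=\sum_{b_{ki}<0}(-b_{ki})\,\omega_{ij}$. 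Log-canonicity in the adjacent cluster requires the coefficients of the linearly independent monomials $M_+$ and $M_-$ to coincide, which is exactly the condition $s^+_{kj}-s^-_{kj}=(\wB\Omega^\wx)_{kj}=0$. Letting $k\in[n]$ and $j\ne k$ vary gives the desired matrix identity $\wB\Omega^\wx=(D\ 0)$ with $D$ diagonal.

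For \emph{sufficiency}, the same calculation produces an explicit formula for the mutated coefficient matrix: $\omega'_{kj}$ is the common value $s^+_{kj}-\omega_{kj}$ for $j\ne k$, while $\omega'_{ij}=\omega_{ij}$ for $i,j\ne k$. I would then verify, combined with the mutation rule for $\wB$, that the identity $\wB\Omega=(D\ 0)$ is preserved under $\mu_k$, so that log-canonicity propagates by induction on mutation distance to every seed in $\CC(\wB)$.

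The main obstacle will be the mutation-invariance verification in the sufficiency step, which unfolds into a sign case analysis depending on the entries of $\wB$. This is precisely the technical computation underlying Theorem~1.4 of \cite{GSV1}; the full-rank hypothesis $\rank\wB=n$ is what renders the linear system nondegenerate and promotes the one-way implication into an equivalence.
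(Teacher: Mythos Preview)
Your approach is correct and matches the paper's, which offers no proof beyond the citation to Theorem~1.4 of \cite{GSV1}; your sketch faithfully unpacks the argument behind that theorem (single-mutation Leibniz computation for necessity, mutation-invariance of the relation $\wB\Omega=(D\ 0)$ for sufficiency).

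One small correction to your closing remark: the full-rank hypothesis is not what ``promotes the one-way implication into an equivalence.'' Neither direction of the argument you outline actually invokes $\rank\wB=n$---the Leibniz computation and the mutation-invariance check both go through regardless of rank. In \cite{GSV1} the hypothesis enters to guarantee the \emph{existence} of compatible brackets with nondegenerate $D$ and to parametrize the resulting affine space of such brackets, which is a separate issue from the bare equivalence stated here.
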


Clearly, the notion of compatibility and the result stated above extend to Poisson brackets on $\FF_\C$ without any changes.
A different description of compatible Poisson brackets on $\FF_\C$ is based on the notion of a toric action.
 Fix an arbitrary extended cluster
$\wx=(x_1,\dots,x_{n+m})$ and define a {\it local toric action\/} of rank $r$ as the map 
$\TE^W_{\d}:\FF_\C\to
\FF_\C$ given on the generators of $\FF_\C=\C(x_1,\dots,x_{n+m})$ by the formula 
\begin{equation}
\TE^W_{\d}(\wx)=\left ( x_i \prod_{\alpha=1}^r d_\alpha^{w_{i\alpha}}\right )_{i=1}^{n+m},\qquad
\d=(d_1,\dots,d_r)\in (\C^*)^r,
\label{toricact}
\end{equation}
where $W=(w_{i\alpha})$ is an integer $(n+m)\times r$ {\it weight matrix\/} of full rank, and extended naturally to the whole $\FF_\C$. 

Let $\wx'$ be another extended cluster, then the corresponding local toric action defined by the weight matrix $W'$
is {\it compatible\/} with the local toric action \eqref{toricact} if the following diagram is commutative for
any fixed $\d\in (\C^*)^r$:
$$
\begin{CD}
\FF_\C=\C(\wx) @>>> \FF_\C=\C(\wx')\\
@V\TE^W_{\d} VV @VV \TE^{W'}_{\d}V\\
\FF_\C=\C(\wx) @>>> \FF_\C=\C(\wx')
\end{CD}
$$
(here the horizontal arrows are induced by $x_i\mapsto x'_i$ for $1\le i\le n+m$). If local toric actions at all clusters are compatible, they define a {\it global toric action\/} $\TE_{\d}$ on $\FF_\C$ called the extension of the local toric action \eqref{toricact}. Lemma~2.3 in \cite{GSV1} claims that \eqref{toricact} extends 
to a unique global action of $(\C^*)^r$  if and only if $\wB W = 0$. Therefore, if $\rank\wB=n$, then the maximal possible rank of a global toric action equals $m$. Any global toric action can be obtained from a toric action of
the maximal rank by setting some of $d_i$'s equal to~$1$.

A description of Poisson brackets on $\FF_\C$ compatible with a cluster structure $\CC=\CC(\wB)$ 
based on the notion of the global toric action is suggested in \cite{GSSV}.
Given a Poisson bracket $\Poi_0$ on $\FFF_\C$ compatible with $\CC$, one can obtain all other compatible 
brackets as follows.
   
Assume that $(\mathbb{C}^*)^m$ is equipped with a Poisson structure given by
\begin{equation}\label{torbra}
\{d_i, d_j\}_V = v_{ij} d_i d_j,
\end{equation}
where $V=(v_{ij})$ is a skew-symmetric matrix. 

\begin{proposition} For any $V$, there exists a  Poisson structure $\Poi_V^\CC$ compatible with  $\CC$ such 
that the map $\left ((\mathbb{C}^*)^m \times \FFF_\C , \Poi_V \times \Poi_0 \right )\to 
\left ( \FFF_\C, \Poi_V^\CC\right )$ extended from the action $(\d,\wx) \mapsto 
\TE_{\d}(\wx)$ is Poisson. Moreover, every compatible Poisson bracket on $\FFF_\C$ is a scalar multiple of $\Poi_V^\CC$ for some $V$.
\label{allcompat}
\end{proposition}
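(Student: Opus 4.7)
My plan is to construct $\Poi_V^\CC$ by specifying its coefficient matrix in a fixed extended cluster, verify compatibility using Proposition~\ref{Bomega}, and then establish rigidity to obtain the converse.

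Fix an extended cluster $\wx$; let $W$ be the $(n+m)\times m$ weight matrix of the maximal-rank global toric action on $\wx$, so that $\wB W=0$ and $W$ has full column rank, and let $\Omega^0$ be the coefficient matrix of $\Poi_0$ in $\wx$. Using $\mu^*x_i=x_i\prod_\alpha d_\alpha^{w_{i\alpha}}$, the biderivation property, $\{d_\alpha,d_\beta\}_V=v_{\alpha\beta}d_\alpha d_\beta$, and $\{x_i,x_j\}_0=\omega_{ij}^0 x_ix_j$, a short direct computation produces
$$\{\mu^*x_k,\mu^*x_\ell\}_{V\times 0}=\bigl(\omega_{k\ell}^0+(WVW^T)_{k\ell}\bigr)\, \mu^*x_k\cdot \mu^*x_\ell,$$
forcing the candidate coefficient matrix $\Omega^V=\Omega^0+WVW^T$ on the target if $\mu$ is to be Poisson. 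This matrix is skew-symmetric, and $\wB\Omega^V=\wB\Omega^0+\wB W V W^T=(D\ 0)$ since $\wB W=0$, so Proposition~\ref{Bomega} produces a bracket $\Poi_V^\CC$ on $\FF_\C$ compatible with $\CC$ and having coefficient matrix $\Omega^V$ in $\wx$; the identity above then certifies that $\mu$ is Poisson.

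For the converse, take any compatible $\Poi$ with coefficient matrix $\Omega$ in $\wx$ and $\wB\Omega=(\tilde D\ 0)$. Decompose $\Omega=\begin{pmatrix}\Omega_1&\Omega_2\\-\Omega_2^T&\Omega_3\end{pmatrix}$; then $B\Omega_2+P\Omega_3=0$ determines $\Omega_2$ in terms of $\Omega_3$ (assuming $B$ invertible, which holds for the cluster structures of interest), and combining the identity $B\Omega_1-P\Omega_2^T=\tilde D$ with the skew-symmetries $\Omega_1^T=-\Omega_1$ and $B^T=-B$ yields $\tilde D B=B\tilde D$. Since $B$ is skew-symmetric, this means $B_{ij}(\tilde d_i-\tilde d_j)=0$ for all $i,j$, so $\tilde D$ is constant on each connected component of the exchange quiver; for a connected quiver this forces $\tilde D=cI$, and likewise $D=c_0 I$. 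Then $\Omega-(c/c_0)\Omega^0$ is skew-symmetric and annihilated by $\wB$, and since $\ker\wB$ is the column span of $W$ a standard linear-algebra fact gives $\Omega-(c/c_0)\Omega^0=WV'W^T$ for a unique skew-symmetric $V'$. Setting $V=(c_0/c)V'$ yields $\Poi=(c/c_0)\Poi_V^\CC$.

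The main obstacle is the rigidity step: the conclusion $\tilde D=cI$ requires both the commutation $\tilde D B=B\tilde D$ derived from skew-symmetry and connectedness of the exchange quiver, with additional care if $B$ is singular. The auxiliary fact that every skew-symmetric matrix in $\ker\wB$ has the form $WVW^T$ for some skew-symmetric $V$ is the other key linear-algebra input, yielding a $\bigl(\binom{m}{2}+1\bigr)$-dimensional parameter family that matches the full space of compatible brackets.
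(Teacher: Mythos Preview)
The paper does not prove this proposition in the text; it is quoted from \cite{GSSV}, so there is no in-paper argument to compare against. Your approach---building $\Omega^V=\Omega^0+WVW^T$ from the Poisson-map requirement, invoking Proposition~\ref{Bomega}, and then classifying all compatible $\Omega$ via the commutation $\tilde D B=B\tilde D$ together with $\ker\wB=\operatorname{im}W$---is the natural one and is essentially correct.

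Two small repairs. First, the clause ``assuming $B$ invertible, which holds for the cluster structures of interest'' should be deleted: you never use the consequence (that $\Omega_2$ is determined by $\Omega_3$) anywhere in the rest of the argument, and the hypothesis is false in many relevant cases (any skew-symmetric $B$ of odd size is singular). Second, the derivation of $\tilde D B=B\tilde D$ needs more than just the skew-symmetry of $\Omega_1$ and $B$: transpose $B\Omega_1-P\Omega_2^T=\tilde D$ to get $\Omega_1 B-\Omega_2 P^T=\tilde D$, multiply the two versions by $B$ on the right and on the left respectively, subtract, and then use both $B\Omega_2=-P\Omega_3$ and its transpose (which brings in the skew-symmetry of $\Omega_3$) to see that the difference vanishes. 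You should spell this out.

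The connectedness hypothesis on the exchange quiver that you flag is a genuine side condition, satisfied in the paper's applications but not stated in the proposition. You also implicitly need $c_0\neq 0$ (and $c\neq 0$) when you form $c/c_0$ and $c_0/c$; this is the standard nondegeneracy assumption on the reference bracket $\Poi_0$, but it is worth stating.
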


\section{Poisson-Lie groups and the main conjecture}
\label{SecMC}

\subsection{}
Let $\G$ be a Lie group equipped with a Poisson bracket $\Poi$.
$\G$ is called a {\em Poisson-Lie group\/}
if the multiplication map
$$
\G\times \G \ni (x,y) \mapsto x y \in \G
$$
is Poisson. Perhaps, the most important class of Poisson-Lie groups
is the one associated with classical R-matrices. 

Let $\g$ be the Lie algebra of $\G$
equipped with a nondegenerate invariant bilinear form $(\ ,\ )$, 
$\mathfrak{t}\in \g\otimes\g$ be the corresponding Casimir element.
For an arbitrary element $r=\sum_i a_i\otimes b_i\in\g\times\g$ denote
\[
[[r,r]]=\sum_{i,j} [a_i,a_j]\otimes b_i\otimes b_j+\sum_{i,j} a_i\otimes [b_i,a_j]\otimes b_j+
\sum_{i,j} a_i\otimes a_j\otimes [ b_i,b_j]
\]
and $r^{21}=\sum_i b_i\otimes a_i$.

A {\em classical R-matrix} is an element $r\in \g\otimes\g$ that satisfies
{\em the classical Yang-Baxter equation (CYBE)} 
\begin{equation}
[[r, r]] =0
\label{CYBE}
\end{equation}
together with the condition 
\begin{equation}
r + r^{21} = \mathfrak{t}.
\label{rskew}
\end{equation}

Given a solution $r$ to \eqref{CYBE}, one can construct explicitly the Poisson-Lie bracket on the Lie group $\G$.
Choose a basis $\{I_i\}$ in $\g$, and let $\partial^R_i$ and $\partial^L_i$ be the right
and the left invariant vector fields on $\G$ whose values at the unit element equal $I_i$. Write $r$ as
$r=\sum_{i,j} r_{ij}I_i\otimes I_j$, then the Poisson-Lie bracket on $\G$ is given by
\begin{equation}\label{sklya}
\{f_1,f_2\}=\sum_{i,j}r_{ij}\left(\partial^R_i f_1\partial^R_j f_2-
\partial^L_i f_1\partial^L_j f_2\right),
\end{equation}
see \cite[Proposition 4.1.4]{KoSo}. This bracket is called the {\it Sklyanin bracket\/} corresponding to $r$.

The classification of classical R-matrices for simple complex Lie groups was given by Belavin and Drinfeld in \cite{BD}. Let $\G$ be a simple complex Lie group,
$\g$ be the corresponding Lie algebra, $\h$ be its Cartan subalgebra,
$\Phi$ be the root system associated with $\g$, $\Phi^+$ be the set of positive roots, and $\Delta\subset \Phi^+$ be the 
set of positive simple roots. 
A {\em Belavin-Drinfeld triple} $T=(\Gamma_1,\Gamma_2, \gamma)$
consists of two subsets $\Gamma_1,\Gamma_2$ of $\Delta$ and an isometry $\gamma:\Gamma_1\to\Gamma_2$ nilpotent in the 
following sense: for every $\alpha \in \Gamma_1$ there exists $m\in\mathbb{N}$ such that $\gamma^j(\alpha)\in \Gamma_1$ for $j=0,\ldots,m-1$, but $\gamma^m(\alpha)\notin \Gamma_1$. The isometry $\gamma$ extends in a natural way to a map between root systems generated by $\Gamma_1, \Gamma_2$. This allows one to define a partial ordering on $\Phi$: $\alpha \prec_T \beta$ if $\beta=\gamma^j(\alpha)$ for some $j\in \mathbb{N}$. 

Select root vectors $e_\alpha \in\g$ satisfying 
$(e_{-\alpha},e_\alpha)=1$. According to the Belavin-Drinfeld classification, the following is true (see, e.g., \cite[Chap.~3]{CP}).

\begin{proposition}\label{bdclass}
{\rm(i)} Every classical R-matrix is equivalent {\rm(}up to an action of $\sigma\otimes\sigma$, where $\sigma$ is an 
automorphism of $\g$\/{\rm)} to the one of the form
\begin{equation}
\label{rBD}
r= r_0 + \sum_{\alpha\in \Phi^+} e_{-\alpha}\otimes e_\alpha + \sum_{\stackrel{\alpha \prec_T \beta}{\alpha,\beta\in\Phi^+}} e_{-\alpha}\wedge e_\beta.
\end{equation}

{\rm(ii)} $r_0\in \h\otimes\h$ in \eqref{rBD} satisfies 
\begin{equation}
(\gamma(\alpha)\otimes \Id )r_0 + (\Id\otimes \alpha )r_0 = 0  
\label{r01}
\end{equation}
for any $\alpha\in\Gamma_1$ and
\begin{equation}
r_0 + r_0^{21} = \mathfrak{t}_0,
\label{r02}
\end{equation}
where $\mathfrak{t}_0$ is the $\h\otimes\h$-component of $\mathfrak{t}$. 

{\rm(iii)} Solutions $r_0$ to \eqref{r01}, \eqref{r02}
form a linear space of dimension $\frac{k_T(k_T-1)}{2}$, where 
$k_T= | \Delta \setminus \Gamma_1 |$; more precisely, define
\begin{equation}
\h_T=\{ h\in\h \ : \ \alpha(h)=\beta(h)\ \mbox{if}\ \alpha\prec_T\beta\}, 
\label{htau}
\end{equation}
then $\dim\h_T=k_T$, and if $r_0'$ is a fixed solution of \eqref{r01}, \eqref{r02}, then
every other solution has a form $r_0=r_0' + s$, where $s$ is an arbitrary element of $\h_T\wedge\h_T$.
\end{proposition}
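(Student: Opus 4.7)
\medskip

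The plan is to analyze the R-matrix via its decomposition with respect to the triangular decomposition $\g=\n^-\oplus\h\oplus\n^+$ and to exploit CYBE to force the combinatorial structure of a Belavin--Drinfeld triple. Write
\[
r = r_0 + \sum_{\alpha,\beta\in\Phi} r_{\alpha,\beta}\, e_{-\alpha}\otimes e_\beta + \text{(terms in } \h\otimes\g_\alpha \text{ and } \g_\alpha\otimes\h\text{)},
\]
where $r_0\in\h\otimes\h$ and we use the basis dual to $\{e_\alpha\}$ under $(\cdot,\cdot)$. First I would use \eqref{rskew} componentwise: the $\h\otimes\h$ part gives \eqref{r02}, the diagonal root part $\g_\alpha\otimes\g_{-\alpha}$ forces $r_{\alpha,-\alpha}+r_{-\alpha,\alpha}=1$ (which one fixes to $r_{-\alpha,\alpha}=1$, $r_{\alpha,-\alpha}=0$ for $\alpha\in\Phi^+$ by an appropriate gauge, producing the term $\sum_{\alpha\in\Phi^+}e_{-\alpha}\otimes e_\alpha$ in \eqref{rBD}), and the off-diagonal part makes the remaining $r_{\alpha,\beta}$ skew in the two tensor slots. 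A standard argument (Drinfeld's nondegeneracy trick) eliminates the mixed $\h\otimes\g_\alpha$ components up to a $\sigma\otimes\sigma$ automorphism.

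Next I would feed this into CYBE. Projecting $[[r,r]]=0$ onto triple weight components $\g_\alpha\otimes\g_\beta\otimes\g_\gamma$ yields quadratic identities in the $r_{\alpha,\beta}$. For the weights with $\alpha+\beta+\gamma=0$ these identities, together with the fact that $r_0$ only contributes through $[r_0,\cdot]$, constrain the support $S=\{(\alpha,\beta)\in\Phi^+\times\Phi^+ : r_{-\alpha,\beta}\neq 0,\ \alpha\neq\beta\}$ to be the graph of a bijection $\gamma$ preserving Lie brackets of root vectors; hence $\gamma$ preserves the inner product, i.e.\ it is an isometry. Restricting to simple roots and using that any isometric extension of such a map is generated by its action on simple roots produces the subsets $\Gamma_1,\Gamma_2\subset\Delta$ and the isometry $\gamma\colon\Gamma_1\to\Gamma_2$. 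The remaining CYBE components force $r_{-\alpha,\beta}=\pm 1$ whenever $\alpha\prec_T\beta$, and nilpotency of $\gamma$ comes from the observation that an infinite $\prec_T$-chain would yield an infinite sequence of distinct positive roots, contradicting finiteness of $\Phi$. This establishes part~(i).

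For parts (ii) and (iii), having fixed the root part, I would record the surviving CYBE components that lie in $\h\otimes\g_\alpha\otimes\g_{-\alpha}$: these are precisely linear conditions on $r_0$, and a direct computation shows that they are equivalent to \eqref{r01} for $\alpha\in\Gamma_1$. Equation \eqref{r02} has already appeared from skew-symmetry. To count solutions, decompose $r_0 = \tfrac12\mathfrak t_0 + s$ with $s\in\h\wedge\h\cong\Lambda^2\h$; then \eqref{r02} is automatic, and \eqref{r01} becomes a linear system on $s$. Identifying $\Lambda^2\h$ with skew endomorphisms of $\h$ via $(\cdot,\cdot)$, the equation \eqref{r01} translates to $\gamma^*\circ s\, + s\circ\iota = 0$ on $\Gamma_1$, whose kernel is naturally parameterized by $\h_T\wedge\h_T$. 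Verifying $\dim\h_T=k_T$ is a direct orbit count under the permutation generated by $\gamma$ on $\Delta$, completing the dimension formula $\binom{k_T}{2}$.

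The main obstacle is the rigidity argument in part (i): deducing from CYBE alone that the support $S$ must be the graph of an isometry of root subsystems, rather than some more general combinatorial structure. This requires carefully choosing triple weights that force cancellation of all but a single term in $[[r,r]]$ and invoking the structure constants of $\g$ in a uniform way across all Dynkin types; the rest of the proof is essentially linear algebra once this combinatorial backbone is in place.
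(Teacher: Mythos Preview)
The paper does not prove this proposition at all: it is quoted as the Belavin--Drinfeld classification and referenced to \cite[Chap.~3]{CP}. So there is no in-paper argument to compare your sketch against; what you have written is an outline of the original Belavin--Drinfeld proof rather than an alternative to anything the authors do.

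That said, your outline has a genuine gap in the nilpotency step. You write that ``an infinite $\prec_T$-chain would yield an infinite sequence of distinct positive roots, contradicting finiteness of $\Phi$.'' But $\Gamma_1\subset\Delta$ is finite, so if $\gamma^j(\alpha)\in\Gamma_1$ for all $j$, the sequence must eventually \emph{cycle}, not escape to infinitely many distinct roots. Finiteness of $\Phi$ therefore does not by itself exclude a cycle $\gamma^m(\alpha)=\alpha$; ruling out cycles is precisely the content of the nilpotency condition, and in the Belavin--Drinfeld argument it is forced by a further analysis of CYBE (roughly, a cycle would produce a nonzero contribution to $[[r,r]]$ in a weight component where nothing else can cancel it). You would need to supply that argument.

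A couple of smaller imprecisions: the splitting $r_{-\alpha,\alpha}=1$, $r_{\alpha,-\alpha}=0$ is not a gauge choice but the decomposition of the symmetric part $\tfrac12\mathfrak t$ according to $\Phi^+$, together with showing from CYBE that the skew part has no diagonal $\g_\alpha\wedge\g_{-\alpha}$ component; and the vanishing of the mixed $\h\otimes\g_\alpha$ terms is not obtained by an automorphism but follows once one shows (again from CYBE) that $r$ is $\h$-invariant for a suitably chosen Cartan subalgebra. These are the places where the actual work in \cite{BD} or \cite{CP} lies, consistent with your own identification of the rigidity step as the main obstacle.
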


We say that two classical R-matrices that have a form \eqref{rBD} belong to the same {\em Belavin-Drinfeld class\/}
if they are associated with the same Belavin-Drinfeld triple.

\subsection{}
Let $\G$ 
be a simple complex Lie group. Given a Belavin-Drinfeld triple $T$ for $\G$,
define the torus $\H_T=\exp \h_T\subset\G$.

We conjecture that there exists a classification of regular cluster structures on $\G$ that is completely
parallel to the Belavin-Drinfeld classification.

\begin{conjecture}
\label{ulti}
Let $\G$ be a simple complex Lie group.
For any Belavin-Drinfeld triple $T=(\Gamma_1,\Gamma_2,\gamma)$ there exists a cluster structure
$(\CC_T,\varphi_T)$ on $\G$ such that

{\rm (i)} 
the number of stable variables is $2k_T$, and the corresponding extended exchange matrix has a full rank; 

{\rm (ii)} $(\CC_T,\varphi_T)$ is regular, and the corresponding upper cluster algebra $\UU_\C(\CC_T)$ 
is naturally isomorphic to $\O(\G)$;

{\rm (iii)} the global toric action of $(\mathbb{C}^*)^{2k_T}$ on $\C(\G)$ is generated by the action
of $\H_T\times \H_T$ on $\G$ given by $(H_1, H_2)(X) = H_1 X H_2$;

 {\rm (iv)} for any solution of CYBE that belongs to the Belavin-Drinfeld class specified  by $T$, the corresponding Sklyanin bracket is compatible with $\CC_T$;

{\rm (v)} a Poisson-Lie bracket on $\G$ is compatible with $\CC_T$ only if it is a scalar multiple
of the Sklyanin bracket associated with a solution of CYBE that belongs to the Belavin-Drinfeld class specified  by $T$.
\end{conjecture}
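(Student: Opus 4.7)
The plan is to give, for each Belavin-Drinfeld triple $T=(\Gamma_1,\Gamma_2,\gamma)$, an explicit construction of an extended exchange matrix $\wB_T$ together with an embedding $\varphi_T$ of the ambient field into $\C(\G)$, and then verify properties (i)--(v) in turn. My first move is to secure the base case $T_0=(\emptyset,\emptyset,\emptyset)$, for which $k_{T_0}=|\Delta|=\rank\g$ and the standard Sklyanin bracket is expected. Here one takes the Berenstein--Fomin--Zelevinsky cluster structure on the open double Bruhat cell $G^{w_0,w_0}$ and transports it to $\C(\G)$ via the inclusion of $G^{w_0,w_0}$ as a Zariski dense open subset. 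Properties (i), (iii), and (iv) are then encoded in the BFZ quiver and the classical computation of the Sklyanin bracket in this chart (cf.\ Ch.~4 of \cite{GSVb}); property (ii) is obtained by verifying the hypotheses of Proposition~\ref{regfun}, and (v) follows from the dimension count afforded by Proposition~\ref{allcompat}.

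For a general triple $T$, I would produce $\wB_T$ by a modification of the BFZ matrix that folds the combinatorics along the partial order $\prec_T$. Concretely, certain BFZ generalized minors should be replaced by products or quotients taken along $\gamma$-chains so that the number of extended-cluster directions along which $\varphi_T$ has nontrivial $(H\times H)/(\H_T\times\H_T)$-weight degenerates appropriately, leaving exactly $2k_T$ stable variables; corresponding arrows of the quiver are rerouted to maintain skew-symmetry and the rank condition $\rank\wB_T=n$. The cases $\G=SL_n$ for $n\le 4$ treated in Section~\ref{Sec34} should serve as the combinatorial template, exhibiting the precise rerouting dictated by $\gamma$-chains.

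The main obstacle is the simultaneous verification of (iii) and (iv). For (iv), by Proposition~\ref{Bomega} one must check that $\wB_T\Omega^{\wx_T}=(D\;0)$ for the coefficient matrix of the Sklyanin bracket \eqref{sklya} associated with an R-matrix of the form \eqref{rBD}; the non-principal summand $\sum_{\alpha\prec_T\beta}e_{-\alpha}\wedge e_\beta$ couples pairs of roots in a way that must match the folded quiver combinatorics, and the freedom in choosing $r_0$ subject to \eqref{r01}--\eqref{r02} must translate into freedom in the diagonal block $D$. I would attack this by induction on $|\Gamma_1|$ using the reduction theorem of Section~\ref{reduction}: removing a single $\gamma$-orbit at a time should relate $\CC_T$ to $\CC_{T'}$ for a smaller triple $T'$ whose compatibility has already been established, and the inductive step reduces to a local computation near the orbit.

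Once (iv) is in hand, property (iii) amounts to identifying the weight matrix of the $\H_T\times\H_T$-action by left-right multiplication with a basis of the kernel of $\wB_T$; since $\rank\wB_T=n$ this kernel is $2k_T$-dimensional and the identification is essentially combinatorial. Property (v) then follows from Proposition~\ref{allcompat}: any compatible Poisson bracket is, up to scalar, of the form $\Poi_V^{\CC_T}$ for some skew-symmetric $V$, and imposing the Poisson-Lie condition together with the Belavin-Drinfeld classification forces the associated R-matrix into the class of $T$. Finally, (ii) follows by invoking Proposition~\ref{regfun}: (i) supplies the rank condition, the explicit construction provides an extended cluster and its one-step mutations with regular $\varphi_T$-images, property (iii) ensures that the stable variables have nontrivial zero loci in $\G$, and condition (v) of Proposition~\ref{regfun} is reduced to a density argument combined with the Laurent phenomenon.
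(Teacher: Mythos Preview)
The statement you are attempting to prove is a \emph{conjecture}; the paper does not prove it in general. What the paper actually establishes is: the full statement for the trivial triple $T_0$ (Theorem~\ref{triv}), the full statement for all triples in $SL_n$ with $n\le 4$ by explicit case-by-case construction, and the reduction Theorem~\ref{partial}, which says that \emph{for a fixed} $T$, once (i), (iii), and one instance of (iv) are known, then (iv) and (v) follow for the entire Belavin--Drinfeld class. Your first paragraph, covering $T_0$, is in line with the paper's Theorem~\ref{triv}. Beyond that, your proposal is a plan for a result the paper leaves open.

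There are two concrete gaps. First, the heart of the problem---the construction of $(\CC_T,\varphi_T)$ for general $T$---is not actually carried out: ``folding along $\gamma$-chains'' and ``rerouting arrows'' are suggestive phrases, not a definition of $\wB_T$ or of the functions $\varphi_T(x_i)$, and the $SL_4$ examples in Section~\ref{Sec34} were found by starting from the bracket and searching for a log-canonical chart (see Remark~\ref{reallife}), not by a combinatorial folding rule one could extrapolate. Second, you misread the reduction theorem. Theorem~\ref{partial} does \emph{not} relate $\CC_T$ to $\CC_{T'}$ for a smaller triple $T'$, and provides no induction on $|\Gamma_1|$; it stays within a single $T$ and upgrades one compatible R-matrix to the whole class. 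So your inductive step ``removing a single $\gamma$-orbit at a time'' has no support in the paper, and without it you have no mechanism for establishing (i), (iii), or even one instance of (iv) beyond the cases the paper already handles by hand.
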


\begin{remark}\label{tam}
Let us explain the meaning of assertion (iii) of Conjecture 
\ref{ulti} in more detail. For any $H\in\H$ and any weight $\omega\in\h^*$ put  $H^\omega=e^{\omega(h)}$, whenever
$H=\exp h$. 
Let $(\wx, \wB)$ be a seed in $\CC_T$, and $y_i=\varphi(x_i)$ for $i\in [n+m]$.
Then (iii) is equivalent to the following:

1) for any $H_1, H_2 \in \H_T$ and any $X\in\G$,
$$
y_i(H_1 X H_2)= H_1^{\eta_i} H_2^{\zeta_i} y_i(X)
$$
for some weights $\eta_i,\zeta_i \in \h_T^*$ ($i\in[n+m]$);

2)
$\mbox{span}\{\eta_i\}_{i=1}^{\dim\G} = \mbox{span}\{\zeta_i\}_{i=1}^{\dim\G}=\h^*_T$;

3) for every $i\in [\dim\G-2k_T]$,
$$
\sum_{j=1}^{\dim\G} b_{ij}\eta_j = \sum_{j=1}^{\dim\G} b_{ij}\zeta_j = 0.
$$
\end{remark}

\section{Towards a proof of the main conjecture}\label{reduction}
The goal of this sections is to prove

\begin{theorem}
\label{partial}
Let $T=(\Gamma_1, \Gamma_2,\gamma)$ be a Belavin-Drinfeld triple and $(\CC_T,\varphi_T)$ be a cluster structure
on $\G$.
Suppose that assertions {\rm(i)} and {\rm(iii)} of Conjecture {\rm\ref{ulti}} are valid and that 
assertion {\rm(iv)} is valid for one particular R-matrix in the Belavin-Drinfeld 
class specified  by $T$. Then {\rm(iv)} and {\rm(v)} are valid for the whole Belavin-Drinfeld class specified  
by $T$.
\end{theorem}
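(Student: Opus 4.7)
The approach hinges on Proposition \ref{bdclass}(iii): two R-matrices in the same Belavin-Drinfeld class differ by an element of $\h_T\wedge\h_T$. Combined with assertion (iii) of Conjecture \ref{ulti} (unpacked in Remark \ref{tam}), which makes cluster variables into weight vectors for $\H_T\times\H_T$, this will make the difference of two Sklyanin brackets in the same class log-canonical in every extended cluster. Assertion (iv) then propagates across the class, and a converse weight-matching analysis will give (v).

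\emph{Step 1 (proof of (iv)).} Let $r'=r+s$ with $s\in\h_T\wedge\h_T$. Because \eqref{sklya} is linear in the R-matrix, the difference $\Poi_{r'}-\Poi_r$ is the bilinear form obtained from \eqref{sklya} with $s$ in place of $r$, so it only involves right- and left-invariant derivatives along $\h_T$. Fix any seed $(\wx,\wB)$ in $\CC_T$ and set $y_i=\varphi_T(x_i)$; Remark \ref{tam} supplies weights $\eta_i,\zeta_i\in\h_T^*$ so that $\partial^R_h y_i=\eta_i(h)y_i$ and $\partial^L_h y_i=\zeta_i(h)y_i$ for every $h\in\h_T$. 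Substituting yields
\[
\{y_i,y_j\}_s=\omega^{(s)}_{ij}\,y_iy_j,\qquad\omega^{(s)}_{ij}\in\C,
\]
and since no special feature of the seed was used, $\Poi_s:=\Poi_{r'}-\Poi_r$ is log-canonical on every extended cluster of $\CC_T$, hence compatible with it. Adding the given compatible $\Poi_r$ proves (iv) for $r'$ and so for the entire Belavin-Drinfeld class.

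\emph{Step 2 (proof of (v)).} Let $\Poi$ be a Poisson-Lie bracket compatible with $\CC_T$; by the Belavin-Drinfeld classification write $\Poi=c\Poi_{r'}$ for some $c\ne0$ and some R-matrix $r'$ in a Belavin-Drinfeld class $T'$. Since compatibility is linear and both $r,r'$ satisfy \eqref{rskew}, $\Poi-c\Poi_r=c\Poi_{r'-r}$ is again log-canonical on every extended cluster, and $r'-r\in\g\wedge\g$. Decompose $r'-r$ into its $\h\otimes\h$-component and its off-diagonal $e_{-\alpha}\wedge e_\beta$-terms and apply \eqref{sklya} to $\{y_i,y_j\}$. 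Each off-diagonal term shifts the $\H_T\times\H_T$-weight of $y_i$ by the restriction of a root to $\h_T$, while the Cartan term acts diagonally only on $\h_T$-components. For $\{y_i,y_j\}_{r'-r}$ to be proportional to $y_iy_j$ on every cluster, all contributions with a nonzero weight shift must vanish and the surviving Cartan piece must lie in $\h_T\wedge\h_T$. Proposition \ref{bdclass}(iii) then puts $r'$ in the Belavin-Drinfeld class of $T$, so $\Poi=c\Poi_{r'}$ is the required scalar multiple.

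\emph{Main obstacle.} The technical core is the weight-matching step in Step 2. Assertion (iii) only records the action of $\H_T\times\H_T$ on cluster variables, so the action of a root vector $\partial^{R/L}_{e_\alpha}$ is not directly controlled; in particular, off-diagonal pairs $e_{-\alpha}\wedge e_\beta$ for $\alpha\prec_{T'}\beta$ with $\alpha|_{\h_T}=\beta|_{\h_T}$ cannot be ruled out on weight grounds alone. To finish (v) one must find, inside $\CC_T$, enough cluster variables whose pairwise log-canonical brackets separately detect each $e_{-\alpha}\wedge e_\beta$-term that would distinguish $T'$ from $T$. This combinatorial input is the only part of the argument that really depends on the fine structure of the cluster structure produced by Conjecture \ref{ulti}; the rest is a formal manipulation of \eqref{sklya} together with the weight data of Remark \ref{tam}.
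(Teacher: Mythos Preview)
Your Step~1 is correct and gives a clean direct proof of (iv): since $r'-r=s\in\h_T\wedge\h_T$ and the cluster variables are $\H_T\times\H_T$-weight vectors by Remark~\ref{tam}, the difference $\Poi_{r'}-\Poi_r$ is visibly log-canonical in every extended cluster. The paper obtains (iv) less directly, as a by-product of the machinery it builds for (v).

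Step~2, however, has a genuine gap beyond the one you flag. You open with ``by the Belavin-Drinfeld classification write $\Poi=c\Poi_{r'}$ for some R-matrix $r'$ in a Belavin-Drinfeld class $T'$,'' but the Belavin-Drinfeld classification only covers R-matrices satisfying \eqref{CYBE} together with \eqref{rskew}; it does not assert that every Poisson-Lie bracket on $\G$ arises this way. Section~\ref{SecTri} in fact exhibits a triangular Poisson-Lie structure on $SL_2$ that is not of this form. Even granting that every Lie bialgebra structure on a simple $\g$ is coboundary, the resulting $r$ need not lie in any Belavin-Drinfeld class, and the equivalence in Proposition~\ref{bdclass}(i) is only up to $\sigma\otimes\sigma$, so a termwise comparison of $\Poi_{r'}$ with $\Poi_r$ is not immediately available. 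Your acknowledged ``main obstacle'' --- ruling out off-diagonal $e_{-\alpha}\wedge e_\beta$ contributions by weight-matching on $\h_T$ --- is a second, independent gap that you correctly identify as not resolvable from assertion~(iii) alone.

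The paper sidesteps both difficulties by reversing the order of the argument. Rather than starting from an arbitrary Poisson-Lie bracket and trying to locate it in the BD classification, it first invokes Proposition~\ref{allcompat} (together with assertions (i) and (iii)) to describe \emph{all} Poisson brackets compatible with $\CC_T$: they are precisely the scalar multiples of brackets $\Poi_{r,V}$ obtained by pushing forward the product structure on $(\H_T\times\H_T)\times\G$ along the action map $(H_1,H_2,X)\mapsto H_1XH_2$, where $V$ runs over Poisson structures on the torus $\H_T\times\H_T$. Only then is the Poisson-Lie condition imposed, and a short computation (Lemma~\ref{rSbrack}(ii), using that a Poisson-Lie structure vanishes at the identity) shows that $\Poi_{r,V}$ is Poisson-Lie exactly when $V$ reduces to an element $s\in\h_T\wedge\h_T$, i.e.\ when $\Poi_{r,V}=\Poi_{r+s}$ lies in the BD class of $T$. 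No analysis of root-vector contributions is ever needed.
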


\begin{proof}
We start with the following auxiliary statement.

\begin{lemma}
\label{Adr}
Any R-matrix from the Belavin-Drinfeld class specified  by $T$ is invariant under the adjoint action
of $\H_T\otimes\H_T$.
\end{lemma}
\begin{proof} Fix an arbitrary $H\in\H_T$.
The term $r_0$ in (\ref{rBD}) is clearly fixed by $\Ad_{H}\otimes \Ad_{H}$.
Furthermore, 
\[
\Ad_{H}\otimes \Ad_{H} (e_{-\alpha} \otimes e_{\alpha}) = H^{-\alpha} e_{-\alpha}\otimes H^\alpha e_{\alpha}=e_{-\alpha} \otimes e_{\alpha}.
\]
Besides, for $\alpha \prec_T \beta$, 
\[ 
\Ad_{H}\otimes \Ad_{H} (e_{-\alpha}\wedge e_\beta) = H^{\beta-\alpha} e_{-\alpha}\wedge e_\beta= 
e_{-\alpha}\wedge e_\beta, 
\]
since $\beta-\alpha$ annihilates $\h_T$.
\end{proof}

Our plan is to invoke the construction used in  Proposition~\ref{allcompat}, so the first
goal is to define a Poisson structure on the torus $H_T\times \H_T$ satisfying~\eqref{torbra}.
Let $V_1, V_2: \h^*_T \to \h_T$ be two linear skew-symmetric maps,
that is  $\langle \eta, V_i \zeta \rangle = - \langle \zeta, V_i \eta \rangle$ for any
$\eta,\zeta\in\h_T^*$ and $i=1,2$, where $\langle\cdot,\cdot\rangle$ stands for the natural coupling 
between $\h^*_T$ and $\h_T$. Besides, let $V_{12}$ be an  
arbitrary linear map $\h^*_T \to \h_T$. Put
$$
V=\left (
\begin{array}{cc}
V_1 & V_{12}\\
-V_{12}^* & V_2
\end{array}
\right )
: \h^*_T \oplus \h^*_T \to \h_T\oplus \h_T.
$$
Then one can define a Poisson structure $\Poi_V$ on $\H_T\times\H_T$  
by the formula
\begin{equation}
\label{HHbrack}
\{\varphi_1,\varphi_2\}_V = \langle V D \varphi_1, D \varphi_2 \rangle,
\end{equation}
where the differential $D \varphi$ is given by
$$
\langle D \varphi (H_1, H_2), \eta \oplus \zeta\rangle = \left.\frac{d}{dt}\right\vert_{t=0} \left ( \varphi(e^{t\eta}H_1,           
H_2e^{t\zeta})\right ).
$$
In particular, the Poisson bracket of ``monomial" functions on $\H_T\times\H_T$ is given by
\begin{multline*}
\left \{ H_1^{\eta_1} H_2^{\eta_2} , H_1^{\zeta_1} H_2^{\zeta_2} 
\right  \}_V =\\
\left (
\langle V_1\eta_1, \zeta_1 \rangle + \langle V_2\eta_2, \zeta_2  
\rangle
+
\langle V_{12} \eta_1, \zeta_2 \rangle + \langle V_{12}\eta_2,  
\zeta_1 \rangle
\right )
H_1^{\eta_1} H_2^{\eta_2} H_1^{\zeta_1} H_2^{\zeta_2}.
\end{multline*}
By choosing appropriate $\eta_1$, $\eta_2$, $\zeta_1$, $\zeta_2$ in the above relation we make certain that
$\Poi_V$ satisfies~\eqref{torbra}.

Fix an R-matrix $r$ in the Belavin-Drinfeld class specified by $T$  
and denote by
$\Poi_r$ the corresponding Sklyanin bracket.
It will be convenient to rewrite formula (\ref{sklya}) for $\Poi_r$ as
\[
\{f_1,f_2\}_r = \langle R(d^R f_1), d^R f_2 \rangle - \langle R(d^L f_1), d^L f_2 \rangle,
\]
where $R: \h^* \to \h$ is given by $\langle R\eta, \zeta \rangle = \langle r, \eta\otimes\zeta \rangle$.
We will view $\M=\H_T\times\H_T\times\G$ as a direct product of Poisson  
manifolds
$\left ( \H_T\times\H_T,\Poi_V \right )$ and $\left(\G,\Poi_r \right )$.
Consider the map $\mu:\M\ni (H_1, H_2, X) \mapsto H_1X  
H_2\in\G$.

\begin{lemma}
\label{rSbrack}
{\rm(i)} The map $\mu$ induces a Poisson bracket $\Poi_{r,V}$ on $\G$ given by 
$$
\{f_1, f_2\}_{r,V}=\{f_1, f_2\}_r+\left \langle V \left((d^R f_1 )_0\oplus (d^L f_1)_0\right), (d^R f_2)_0\oplus 
(d^L f_2)_0\right \rangle,
$$
where $(\cdot)_0$ stands for the projection onto $\h^*$.
 
{\rm(ii)} The bracket $\Poi_{r,V}$ is Poisson-Lie if and only if $V_{12}=0$ and $V_2=-V_1$.
\end{lemma}

\begin{proof}
Let $f$ be a function on $\G$. For any fixed $(H_1,H_2)\in \H_T\times \H_T$ define the function $f^{H_1,H_2}$ on
$\G$ via $f^{H_1,H_2}(X)=f\circ\mu(H_1,H_2,X)$. Similarly, for any fixed $X\in \G$ define the function $f^X$ on
$\H_T\times \H_T$ via $f^X(H_1,H_2)= f\circ\mu(H_1,H_2,X)$.
Given two functions $f_1,f_2$ on $\G$, let us compute  the  following Poisson bracket on $\M$:
\begin{equation}\label{rS}
\{f_1\circ\mu, f_2\circ\mu\}(H_1,H_2,X) =\\
 \{f_1^{H_1, H_2}, f_2^{H_1, H_2}\}_r (X) +
\{f_1^X, f_2^X\}_V (H_1, H_2).
\end{equation}

First observe that for a function $f$ on $\G$ 
$$
d^L f^{H_1, H_2}(X) = \Ad^*_{H_1} d^L f(H_1X H_2), \qquad d^R f^{H_1, H_2}(X) = \Ad^*_{H_2^{-1}} d^R f(H_1X H_2).
$$
Since, by Lemma \ref{Adr}, $\Ad_H \circ R\circ \Ad^*_H = R$ for any $H\in \H_T$, 
this means that the first term in the right-hand side of \eqref{rS} is
equal to 
$\{f_1, f_2\}_r(H_1X H_2)$.

To compute the second term in \eqref{rS}, note that
$$
D f^X( H_1, H_2) =  \left(d^R f(H_1X H_2)\right )_0\oplus \left(d^L f(H_1X H_2)\right )_0.
$$
Then it follows from
\eqref{HHbrack} and \eqref{rS} that
$$
\{f_1\circ\mu, f_2\circ\mu\} =\left (\{f_1, f_2\}_r+\left \langle V \left((d^R f_1)_0\oplus (d^L f_1)_0\right), 
(d^R f_2)_0\oplus (d^L f_2)_0\right \rangle\right )\circ\mu,
$$
which proves the first claim of the lemma. 

Conditions on $V$ that ensure that $\Poi_{r,V}$ is Poisson-Lie
drop out immediately from the fact that any Poisson-Lie structure is trivial at the identity of $\G$.
\end{proof}

We can now proceed with the proof of the theorem.
Assertion (i) guarantees that the toric action mentioned in assertion (iii) has the maximal rank.
Assume that assertion (iii) of Conjecture~\ref{ulti} is valid. 
Then claims 1) and 2) of Remark~\ref{tam}, together with Lemma \ref{rSbrack}(i) and 
Proposition \ref{allcompat} imply that
if $\Poi_r$ is compatible with the cluster structure $\CC_T$, then every other compatible structure
is of the form $\Poi_{r,V}$ for some choice of $V$. 
Since $\varphi_T(\wx)$ defines a coordinate chart on $\G$, we
conclude that any Poisson bracket on $\G$ compatible with $\CC_T$ is, in fact, a scalar multiple
of $\Poi_{r,V}$.
Moreover, by Lemma \ref{rSbrack}(ii), $\Poi_{r,V}$ is Poisson-Lie
 if and only if it can be written in the form (\ref{sklya}) with $r$ replaced by $r+s$, where $s$ is 
an arbitrary element of $\h_T\wedge\h_T$. But this is exactly the description of the Belavin-Drinfeld class
specified by $T$. The proof is complete. 
\end{proof}

\section{Evidence supporting the conjecture}

Here we discuss several instances in which Conjecture \ref{ulti} has been verified.

\subsection{The trivial Belavin-Drinfeld data}\label{Secgen} 
The Belavin-Drinfeld data (triple, class) is said to be {\it trivial\/} if $\Gamma_1=\Gamma_2=\varnothing$.
In this case we use subscript $0$ instead of $T$, so $k_0=|\Delta|$ is the rank of $\G$ and $\H_0=\H$ is the Cartan
subgroup in $\G$. 

\begin{theorem}
\label{triv}
Let $\G$ be a simple complex Lie group of rank $n$, then
there exists a cluster structure
$(\CC_0,\varphi_0)$ on $\G$ such that

{\rm (i)} 
the number of stable variables is $2n$, and the corresponding extended exchange matrix has full rank; 

{\rm (ii)} $(\CC_0,\varphi_0)$ is regular, and the corresponding upper cluster algebra $\UU_\C(\CC_0)$  is naturally isomorphic to $\O(\G)$;

{\rm (iii)} the global toric action of $(\mathbb{C}^*)^{2n}$ on $\C(\G)$ is generated by the action
of $\H\times \H$ on $\G$ given by $(H_1, H_2)(X) = H_1 X H_2$;

 {\rm (iv)} for any solution of CYBE that belongs to the trivial Belavin-Drinfeld class, the corresponding Sklyanin bracket is compatible with $\CC_0$;

{\rm (v)} a Poisson-Lie bracket on $\G$ is compatible with $\CC_0$ only if it is a scalar multiple
of the Sklyanin bracket associated with a solution of CYBE that belongs to the trivial Belavin-Drinfeld class.
\end{theorem}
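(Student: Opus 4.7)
The plan is to use the cluster structure already constructed on the open double Bruhat cell $\G^{w_0,w_0}$ and promote it to a cluster structure on all of $\G$ via the identification of fields of rational functions, then verify the five conditions by combining known results with Theorem \ref{partial} and Proposition \ref{regfun}.

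First I would define $(\CC_0,\varphi_0)$ to be the cluster structure built in \cite{CAIII} (and matched to the Poisson-geometric construction in \cite[Ch.~4.3]{GSVb}) on the open double Bruhat cell $\G^{w_0,w_0}$, with $\varphi_0$ the tautological embedding of cluster variables as generalized minors, regarded as an isomorphism of the ambient field $\FF_\C$ with $\C(\G^{w_0,w_0})=\C(\G)$ (density of the open double Bruhat cell). An initial seed is associated to a fixed double reduced word for $(w_0,w_0)$ of length $\dim\G$, with exactly $2n$ frozen (stable) variables corresponding to the extremal occurrences of each simple reflection on the two sides. Assertion (i) then follows from the standard computation of the rank of the extended exchange matrix for double Bruhat cells. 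Assertion (iii) is verified by observing that the cluster and stable variables are generalized minors $\Delta_{u\omega_i,v\omega_i}$, which transform monomially under left and right $\H$-translation with weights that span $\h^*\oplus\h^*$; the relation $\wB W=0$ needed to extend the action globally (by Lemma~2.3 of \cite{GSV1}, as recalled above) is the content of claim 3) of Remark~\ref{tam}, and is verified directly from the explicit formula for $\wB$ in \cite{CAIII}.

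For assertion (iv), in the case of the standard $r$-matrix (arising from the Borel decomposition), compatibility of the Sklyanin bracket with $\CC_0$ is exactly the content of \cite[Ch.~4.3]{GSVb}. Having established (i), (iii), and (iv) for this one representative, Theorem \ref{partial} immediately upgrades (iv) and (v) to the entire trivial Belavin-Drinfeld class; this is precisely where the reduction theorem pays off.

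The main obstacle will be assertion (ii), the natural isomorphism $\UU_\C(\CC_0)\cong\O(\G)$. Regularity of the cluster variables on all of $\G$ (not merely on $\G^{w_0,w_0}$) follows because generalized minors are regular functions on $\G$, and by induction on mutations together with the exchange relations \eqref{exchange}, using that in each relation the exchanged cluster variable is expressed as a Laurent polynomial in regular functions whose numerator is a sum of monomials in regular functions divided by a regular function, and the standard argument (as in \cite{CAIII}) identifies it with another generalized minor. To obtain the natural isomorphism I would apply Proposition \ref{regfun} with $V$ taken to be the complement in $\C^{\dim\G}$ of the locus where the initial extended cluster fails to be a coordinate chart: conditions (i)--(iv) of that proposition are then immediate from the above. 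The crucial remaining point is condition (v), that every $f\in\O(\G)$ lies in $\varphi_0(\UU_\C(\CC_0))$. Here I would proceed as follows: by \cite{CAIII}, $\UU_\C(\CC_0)$ is naturally isomorphic to $\O(\G^{w_0,w_0})$, so $f\vert_{\G^{w_0,w_0}}$ can be written as a Laurent polynomial in every extended cluster; it remains to show there are no truly negative powers of the stable variables when $f$ is $\G$-regular. This is verified by choosing a family of seeds in which each stable variable appears paired with an adjacent ``dual'' cluster variable whose vanishing locus together with that of the stable variable has codimension two in $\G$, so that regularity on $\G$ forces the negative power to cancel; equivalently, one checks that the divisors of the $2n$ stable variables are precisely the irreducible components of $\G\setminus\G^{w_0,w_0}$, each of codimension one, so that Hartogs' extension principle applied to the Laurent expansion eliminates negative exponents. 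This argument, together with the already-established conditions of Proposition \ref{regfun}, yields the natural isomorphism and completes (ii).
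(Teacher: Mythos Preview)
Your overall strategy matches the paper's: take the Berenstein--Fomin--Zelevinsky cluster structure on $\G^{w_0,w_0}$, verify (i), (iii), and (iv) for the standard $r$-matrix, then invoke Theorem~\ref{partial} to obtain (iv) and (v) for the whole trivial class. Your handling of (i), (iii), (iv), and the reduction via Theorem~\ref{partial} is essentially that of the paper; the paper pins down claim~3) of Remark~\ref{tam} as Lemma~4.22 of \cite{GSVb} and the compatibility of the standard bracket as (a modification of) Theorem~4.18 there.

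The gap is in your treatment of (ii). You propose to apply Proposition~\ref{regfun}, but its hypothesis is that $V$ be a Zariski open subset of $\C^{n+m}$, and a simple complex Lie group $\G$ is not of this form (the paper invokes Proposition~\ref{regfun} only in the later $SL_n$ sections, and there it passes to $GL_n$, which \emph{is} open in affine space, precisely for this reason). Your description of $V$ as ``the complement in $\C^{\dim\G}$ of the locus where the initial extended cluster fails to be a coordinate chart'' conflates $\G$ with the target of the cluster-coordinate map; even granting some birational identification, the conclusion of Proposition~\ref{regfun} would concern $\O(V)$ for that auxiliary $V$, not $\O(\G)$. Separately, the assertion that each mutated cluster variable ``is identified with another generalized minor'' is not established for the open cell in general and is not needed.

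The paper bypasses Proposition~\ref{regfun} entirely for (ii). It observes that $\O(\G^{w_0,w_0})$ is the localization of $\O(\G)$ at the $2n$ stable variables, while by Theorem~2.10 of \cite{CAIII} the same localization of $\UU_\C(\CC_0)$ (namely, the upper cluster algebra taken over $\C\P$ rather than $\C\P_+$) is also $\O(\G^{w_0,w_0})$; the isomorphism over $\C\P_+$ then follows. Your Hartogs-type observation---that the stable variables cut out exactly the irreducible components of $\G\setminus\G^{w_0,w_0}$, so a $\G$-regular function cannot carry negative exponents in them---is precisely the content implicit in the last step of this localization argument, so that part of your reasoning is on target; it should simply be run directly rather than through Proposition~\ref{regfun}.
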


\begin{proof}
By Theorem~\ref{partial}, we have to prove assertions (i)--(iii) and exhibit one bracket satisfying (iv). 
As we have mentioned in the Introduction, 
paper \cite{CAIII} suggests a construction of a cluster structure  
on the double Bruhat cell $\G^{u,v}$ for an arbitrary pair of elements $u,v$ in the Weyl group $W$ of $\G$.
Let $u=v=w_0$ be the longest element of $W$, then the corresponding double
Bruhat cell 
is open and dense in $\G$, and hence the construction in \cite{CAIII}
gives rise to a cluster structure on $\G$. We claim that this cluster structure, which we denote by $(\CC_0,\varphi_0)$, 
satisfies all conditions of the theorem.

We start with a brief review  of the construction in \cite{CAIII}. First, following \cite{FZBruhat}, 
let us recall the definition of {\em generalized minors\/} in $\G$.
Let $\N_+$ and $\N_-$ be the upper and the lower maximal unipotent subgroups of $\G$. 
For every $X$ in an open Zarisky dense subset
\[
\G^0=\N_- \H \N_+
\]
of $\G$ there exists a unique {\em Gauss factorization}
$$
X = X_- X_0 X_+, \quad X_+ \in \N_+, \ X_- \in \N_-, \  X_0 \in \H.
$$
For any $X\in \G^0$ and a fundamental weight $\omega_i\in \h^*$ define
\[
\Delta_i(X)=X_0^{\omega_i};
\]
this function can be extended to a regular function on the whole group $\G$. 
For any pair $u,v\in W$, the corresponding
{\em generalized minor\/} is a regular function on $\G$ given by
\begin{equation}\label{genminor}
\Delta_{u\omega_i,v\omega_i}(X)=\Delta_i( u^{-1}X v).
\end{equation}
These functions depend only on the weights $u \omega_i$ and $v\omega_i$, and
do not depend on the particular choice of $u$ and $v$.

 The initial cluster for $(\CC_0,\varphi_0)$ can be chosen as a certain collection of generalized minors, described as follows. 
 Consider two reduced words for $w_0$, one written in
the alphabet $1, \ldots, n$ and another, in the alphabet $-1, \ldots, -n$. Let $\ii= (i_l)$ be a word of length
$2 l(w_0) + n = \dim \G$ defined as a shuffle of the two reduced words above appended with a string
$(-n, \ldots, -1)$ on the left.
For $k\in [2 l(w_0)]$ denote
\[
u_{\le k}=u_{\le k}(\ii)=\prod_{l=1,\dots,k} s_{|i_l|}^{\frac{1-\sign(i_l)}2},\qquad
v_{>k}=v_{>k}(\ii)=\prod_{l=2 l(w_0),\dots, k+1} s_{|i_l|}^{\frac{1+\sign(i_l)}2}.
\]
Besides, for $k\in -[n]$ set $u_{\le k}$ to be the identity and $v_{>k}$ to be equal to
$w_0$. For $k\in -[n] \cup [2 l(w_0)]$ put
\[
\Delta(k;\ii)=\Delta_{u_{\le 
k}\omega_{|i_{
k}|},v_{>
k}\omega_{|i_{
k}|}},
\]
where the right hand side is the generalized minor defined by~\eqref{genminor}. 
Then $\tilde \x= \left (x_{k,\ii}=\Delta(k;\ii)\ : \ k\in -[n] \cup [2 l(w_0)] \right )$ is an extended cluster in
$(\CC_0,\varphi_0)$ with
$2 n = 2 \dim \H$ stable variables given by $ \Delta(k;\ii)$,  $k\in - [ n]$, and  
$ \Delta(k_j;\ii)$,  $j\in [n]$, where $k_j \in [2 l(w_0)]$ is the largest index such that $|i_{k_j}|=j$.
The matrix $\tilde B=(b_{ij})$ for the seed associated with $\tilde \x$ can be described explicitly in terms of the word $\ii$, however we will not need this description here. By Proposition 2.6 in \cite{CAIII},
it has full rank.

So, assertion (i) of the theorem is proved.

To prove assertion (ii), observe that $\O(\G^{w_0,w_0})$ is obtained from $\O(\G)$ via localization at stable 
variables $\Delta(k;\ii)$  and  $\Delta(k_j;\ii)$ defined above. Besides, by Theorem 2.10 in \cite{CAIII}, 
$\O(\G^{w_0,w_0})$ is naturally isomorphic to the upper cluster 
algebra $\UU(\CC_0)$ over $\C\P$, where $\P$ is generated by Laurent monomials in these stable variables.
The latter is obtained via localization at the same stable 
variables from the upper cluster algebra $\UU_\C(\CC_0)$, and hence (ii) follows.

To establish (iii), we need to check claims 1)--3) of Remark~\ref{tam}. Let $H_1, H_2$ be two elements in $\H$.
We want to compute the local toric action 
$\TE^W_{H_1,H_2}$
on $\tilde \x$ generated by the action of $\H\times\H$ on $\G$.  Clearly, 
$\Delta_i(H_1 X H_2)=(H_1 X_0 H_2)^{\omega_i}$, hence
\begin{multline*}
\Delta_{u\omega_i,v\omega_i}(H_1 X H_2)=\Delta_i\left ( (u^{-1}H_1 u) (u^{-1} X v) (v^{-1}H_2 v)\right )\\
= (u^{-1}H_1 u)^{\omega_i} (v^{-1}H_2 v)^{\omega_i} \Delta_{u\omega_i,v\omega_i}(X)
= H_1^{u\omega_i} H_2^{v\omega_i} \Delta_{u\omega_i,v\omega_i}(X).
\end{multline*}
Thus,
$$
\TE^W_{H_1,H_2}(\wx)=\left ( x_{k,\ii} H_1^{u_{\le k}\omega_{|i_k|}} H_2^{v_{>k}\omega_{|i_k|}}\right )_{k \in -[n] \cup [1, 2 l(w_0)]},
$$
where rows of $W$ are given by components of weights $u_{\le k}\omega_{|i_k|} ,\ v_{>k}\omega_{|i_k|}$ with respect to
some basis in $\h^*$, which amounts to claim 1).  
Claim 2) follows from the fact that, for $k \in -[n]$,
exponents of $H_1$ and $H_2$ in the formula above
are $\omega_1,\ldots,\omega_n$ and $w_0\omega_1,\ldots,w_0\omega_n$, respectively, and
each of these two collections spans $\h^*$. Finally
claim 3) that guarantees that 
$\TE^W_{H_1,H_2}$ extends to a global toric action amounts to equations
$$
\sum_{k \in -[n] \cup [1, 2 l(w_0)]} b_{lk} u_{\le k}\omega_{|i_k|}= \sum_{k \in -[n] \cup [1, 2 l(w_0)]} b_{lk} v_{> k}\omega_{|i_k|} = 0 
$$
for $l \in [1, 2 l(w_0)], l \ne k_j\ (j=1,\ldots,n)$. But this is precisely the statement of Lemma 4.22 in \cite{GSVb}, which  proves (iii).

It remains to exhibit a Poisson structure on $\G$ corresponding to the trivial Belavin-Drinfeld data and compatible
with $\CC_0$. An immediate modification of Theorem~4.18 in \cite{GSVb} shows that the standard Poisson-Lie structure on $\G$ satisfies these requirements.
\end{proof}

\subsection{The case of $SL_n$ for $n=2,3,4$}\label{Sec34}
In this Section we prove the following result.

\begin{theorem} Conjecture~{\rm\ref{ulti}} holds for complex Lie groups $SL_2$, $SL_3$ and  $SL_4$.
\end{theorem}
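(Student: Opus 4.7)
The plan is to invoke Theorem~\ref{partial}, which, for each Belavin--Drinfeld triple $T$, reduces the conjecture to verifying parts (i), (ii), (iii) together with part (iv) for a single representative R-matrix in the class specified by $T$. The trivial triple is already handled by Theorem~\ref{triv}, so it suffices to enumerate the non-trivial Belavin--Drinfeld triples for $SL_n$, $n=2,3,4$, and construct the cluster structure case by case. For $SL_2$ there are no non-trivial triples. For $SL_3$, up to swapping $\Gamma_1$ and $\Gamma_2$, the only non-trivial triple is the Cremmer--Gervais datum $(\{\alpha_1\},\{\alpha_2\},\gamma)$ with $\gamma(\alpha_1)=\alpha_2$, for which $k_T=1$. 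For $SL_4$ one writes down the finite list of non-conjugate triples, all of which have $k_T\in\{1,2\}$; these split into a handful of combinatorial types that I would treat individually.

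For each non-trivial triple $T$, I would exhibit an explicit initial extended cluster $\wx_T$ on $SL_n$ together with its quiver. The cluster variables are chosen as polynomial functions in the matrix entries which are eigenvectors of the $\H_T\times\H_T$-action $(H_1,H_2)\cdot X=H_1XH_2$; typical building blocks are minors of $X$, augmented by ``twisted'' polynomial combinations adapted to $T$ (for the Cremmer--Gervais triple on $SL_3$, certain quadratic expressions replace some of the minors so that their weights lie in $\h_T^*$ rather than in $\h^*$). Assertion (i) is a direct rank check on the resulting extended exchange matrix, and assertion (iii) reduces via Remark~\ref{tam} to computing the $\H_T\times\H_T$-weights of the chosen cluster variables and verifying the span and kernel conditions~1)--3). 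Assertion (iv) for one R-matrix (for instance the Cremmer--Gervais R-matrix) is then a finite computation: evaluate the Sklyanin bracket~\eqref{sklya} on $\wx_T$, read off the coefficient matrix $\Omega^{\wx_T}$, and verify via Proposition~\ref{Bomega} that $\wB_T\,\Omega^{\wx_T}=(D\;0)$ with $D$ diagonal.

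The main obstacle is assertion (ii), the natural isomorphism $\UU_\C(\CC_T)\cong\O(SL_n)$, which I would attack via Proposition~\ref{regfun}. Regularity of the initial cluster variables is immediate from their construction as polynomials in matrix entries; regularity of the one-step adjacent cluster variables follows from the exchange relations together with Pl\"ucker-type identities specific to small-rank matrices; the vanishing condition on each stable variable is trivial. The delicate point is hypothesis (v) of Proposition~\ref{regfun}, namely that every regular function on $SL_n$ lies in $\varphi_T(\UU_\C(\CC_T))$. Since the matrix entries generate $\O(SL_n)$, it suffices to express each entry $x_{ij}$ as a Laurent polynomial in some cluster of $\CC_T$, which requires producing an explicit sequence of mutations reaching a cluster containing, or easily yielding, that entry. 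This combinatorial navigation of the exchange graph is where the bulk of the casework lies, particularly for the non-trivial Belavin--Drinfeld triples on $SL_4$ with $k_T=1$, where the quiver is densest and the deviation from the standard cluster structure on the open double Bruhat cell is most pronounced.
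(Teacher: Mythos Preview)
Your overall plan—reduce via Theorem~\ref{partial}, enumerate the non-trivial triples up to isomorphism, construct explicit initial seeds consisting of $\H_T\times\H_T$-weight vectors, and verify (i), (iii), (iv) by direct calculation—matches the paper's proof. Two points in your treatment of assertion~(ii), however, need correction.

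First, Proposition~\ref{regfun} assumes that $V$ is Zariski open in affine space, and $SL_n$ is not. The paper resolves this by first extending each cluster structure to $GL_n$: one adjoins $\det X$ as an additional stable variable, appends a corresponding column to $\wB$, applies Proposition~\ref{regfun} on $GL_n$, and then recovers the statement for $SL_n$ by restricting both $\O(GL_n)$ and $\UU_\C$ to the hypersurface $\det X=1$. This detour should be built into your argument.

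Second, your stated sufficiency criterion for hypothesis~(v)—``express each entry $x_{ij}$ as a Laurent polynomial in \emph{some} cluster''—is not correct as written: membership in $\UU_\C$ means Laurent in \emph{every} cluster. If what you actually intend is to mutate until $x_{ij}$ appears as a cluster variable and then invoke the Laurent phenomenon, that is logically sound but far more laborious than necessary, and there is no guarantee the matrix entries ever become cluster variables. The paper's shortcut is that, since $\wB$ has full rank (assertion~(i)), the upper cluster algebra coincides with the upper bound at the initial seed, i.e., the intersection of the Laurent rings over only the initial cluster and its one-step neighbours (Theorem~3.21 of \cite{GSVb}). Thus condition~(v) reduces to checking, for each of these finitely many clusters, that every matrix entry is Laurent in that cluster—a finite computer-algebra verification, with no navigation of the exchange graph required.
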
 

\begin{proof}
The case of $SL_2$ is completely covered by Theorem~\ref{triv}, since in this case $\Delta$ contains only one 
element, and hence the only Belavin-Drinfeld triple is the trivial one.

Before we move on to the case of $SL_3$, consider the following two isomorphisms of the Belavin-Drinfeld data for $SL_n$ 
(here $n$ is arbitrary): the first one transposes $\Gamma_1$ and $\Gamma_2$ and reverses the direction of $\gamma$, while the 
second one takes each root $\alpha_j$ to $\alpha_{w_0(j)}$. Clearly, these isomorphisms correspond to the automorphisms 
of $SL_n$ given by $X\mapsto -X^t$ and $X\mapsto w_0Xw_0$.   Since we consider R-matrices up to an action of
 $\sigma\otimes\sigma$, in what follows we do not distinguish between Belavin-Drinfeld triples obtained one from the other 
via the above isomorphisms.

In the case of $SL_3$ we have $\Delta=\{\alpha_1,\alpha_2\}$, and hence, up to an isomorphism, there 
is only one non-trivial Belavin-Drinfeld 
triple: $T=(\Gamma_1=\{\alpha_2\}, \Gamma_2=\{\alpha_1\}, \gamma:\alpha_2\mapsto\alpha_1)$. 
In this case $k_T=1$, and hence, by Proposition~\ref{bdclass}(iii), the
corresponding Belavin-Drinfeld class contains a unique R-matrix. It is called the
{\em Cremmer-Gervais R-matrix\/}, and the solution to~\eqref{r01},~\eqref{r02} is given by
$$
r_0 -\frac12 \mathfrak{t}_0=  \frac 1 6 \left (e_{11}\wedge e_{33} - e_{11}\wedge e_{22} - e_{22}\wedge e_{33}\right )
$$
(see e.g.~\cite{GeGi}).

To prove Conjecture~\ref{ulti} in this case, we once again rely on Theorem~\ref{partial}. 
Let us define the cluster structure $(\CC_{\text{CG}},\varphi_{\text{CG}})$ validating assertion (i) of the conjecture. 
Since $\dim\G=8$, the extended exchange matrix $\wB_{\text{CG}}$ should have 6 rows and 8 columns. Put
\[
\wB_{\text{CG}}=\left(\begin{array}{rrrrrrrr}
0 & -1 & -1 & 1 & 0 & 0 & 0 & 0 \\
1 & 0 & -1 & -1 & 0 & 0 & 1 & 0 \\
1 & 1 & 0 & 0 & 1 & -1 & -1 & 0 \\
-1 & 1 & 0 & 0 & 1 & 1 & 0 & -1 \\
0 & 0 & -1 & -1 & 0 & 1 & 0 & 1 \\
0 & 0 & 1 & -1 & -1 & 0 & 0 & 0 
\end{array}\right).
\]
It is easy to check that $\rank \wB_{\text{CG}}=6$. So, to establish (i) it remains to define the field 
isomorphism $\varphi_{\text{CG}}$.

Let $X=(x_{ij})_{i,j=1}^3$ be a matrix in $SL_3$, $\widehat{X}=(\hat x_{ij})_{i,j=1}^3$ be its adjugate
matrix given by $\widehat{X}=X^{-1}\det X$. Given the initial extended cluster $(x_1,\dots,x_8)$, denote
$P_i=\varphi_{\text{CG}}(x_i)$ and put
\begin{align}\label{formaple}
P_1=x_{11},\qquad P_2=x_{13}&, \qquad P_3=x_{21}, \notag\\
P_4=-\hat{x}_{23}, \qquad P_5=-\hat{x}_{31}&, \qquad P_6=-\hat{x}_{33},\\
P_7=x_{13}x_{31}-x_{21}x_{23}, \qquad &P_8=\hat x_{13}\hat x_{31}-\hat x_{21}\hat x_{23}. \notag
\end{align}
A direct computation shows that gradients of $P_i$, $1\le i\le 8$, are linearly independent at a generic point of 
$SL_3$, hence $(P_1,\dots,P_8)$ form a transcendence basis of $\C(SL_3)$, 
and assertion (i) is established.

The proof of assertion (ii) relies on Proposition~\ref{regfun}. Since $SL_n$ (and, in particular, $SL_3$) is not
a Zariski open subset of $\C^k$, we extend the cluster structure $(\CC_{\text{CG}},\varphi_{\text{CG}})$ to a
cluster structure $(\CC_{\text{CG}}',\varphi_{\text{CG}}')$ on $GL_3$ by appending the column $(0, 0, 0, 0, -1, 1)^t$
on the right of the matrix $\wB_{\text{CG}}$ and adding the function $P_9=\det X$ to the initial cluster. 
Conditions (i), (ii) and (iv) of Proposition~\ref{regfun} for $(\CC_{\text{CG}}',\varphi_{\text{CG}}')$ are clearly 
true, and condition (iii) is verified by direct computation. The ring of regular functions on $GL_3$ is generated by
the matrix entries $x_{ij}$. By Theorem~3.21 in \cite{GSVb}, condition (i) implies
that the upper cluster algebra coincides with the intersection of rings of Laurent polynomials in cluster variables
taken over the initial cluster and all its adjacent clusters. Therefore, to check condition (v) of 
Proposition~\ref{regfun}, it suffices to check that every matrix entry can be written as a Laurent polynomial in
each of the seven clusters mentioned above. This fact is verified by direct computation with Maple: we solve 
system of equations~\eqref{formaple}, as well as six similar systems, with respect to $x_i$. Since $GL_3$ is Zariski
open in $\C^9$, $\UU_\C(\CC_{\text{CG}}')$ is naturally isomorphic to $\O(GL_3)$ by Proposition~\ref{regfun}.
Now assertion (ii) for $(\CC_{\text{CG}},\varphi_{\text{CG}})$ follows from the fact that both 
$\UU_\C(\CC_{\text{CG}})$ and $\O(SL_3)$ are obtained from their $GL_3$-counterparts via restriction to $\det X=1$. 

To prove assertion (iii), we parametrize the left and the right action of $\H_{\text{CG}}$ by $\diag(t,1,t^{-1})$
and $\diag(z,1,z^{-1})$, respectively. Then
\begin{multline*}
\left(\begin{array}{ccc}
t & 0 & 0\\
0 & 1 & 0\\
0 & 0 & t^{-1}
\end{array}\right)
\left(\begin{array}{ccc}
x_{11} &  x_{12}& x_{13}\\
 x_{21}&  x_{22}& x_{23}\\
 x_{31}& x_{32} & x_{33}
\end{array}\right)
\left(\begin{array}{ccc}
z & 0 & 0\\
0 & 1 & 0\\
0 & 0 & z^{-1}
\end{array}\right)=\\
\left(\begin{array}{ccc}
tzx_{11} &  tx_{12}& tz^{-1}x_{13}\\
 zx_{21}&  x_{22}& z^{-1}x_{23}\\
t^{-1}z x_{31}& t^{-1}x_{32} & t^{-1}z^{-1}x_{33}
\end{array}\right),
\end{multline*}
and hence condition 1) of Remark~\ref{tam} holds with $1$-dimensional vectors $\eta_i$, $\zeta_i$ given by
\begin{align*}
\eta_1=1,\quad \eta_2=1, \quad \eta_3=0, \quad &\eta_4=1,\quad \eta_5=-1, \quad \eta_6=1, \quad 
\eta_7=0,\quad \eta_8=0,\\
\zeta_1=1,\quad \zeta_2=-1, \quad \zeta_3=1, \quad &\zeta_4=0,\quad \zeta_5=1, \quad \zeta_6=1, \quad 
\zeta_7=0,\quad \zeta_8=0.
\end{align*}
Conditions 2) and 3) are now verified via direct computation.

Finally, let us check that assertion (iv) holds for the Cremmer-Gervais bracket. A direct computation shows that
this bracket in the basis $(P_1,\dots,P_8)$ satisfies \eqref{cpt}, and that the corresponding coefficient
matrix is given by
\[
3\Omega=
\left(\begin{array}{rrrrrrrr}
0 & -2 & -2 & -1 & -1 & 0 & -3 & -3 \\
2 & 0 & 0 & 0 & 0 & 1 & -2 & -1 \\
2 & 0 & 0 & 0 & 0 & 1 & 1 & -1 \\
1 & 0 & 0 & 0 & 0 & 2 & -1 & -2 \\
1 & 0 & 0 & 0 & 0 & 2 & -1 & 1 \\
0 & -1 & -1 & -2 & -2 & 0 & -3 & -3 \\
3 & 2 & -1 & 1 & 1 & 3 & 0 & 0 \\
3 & 1 & 1 & 2 & -1 & 3 & 0 & 0 
\end{array}\right).
\]
A direct check shows that $\wB_{\text{CG}}\Omega=(-I\; 0)$, hence the
Cremmer-Gervais bracket is compatible with $\CC_{\text{CG}}$ by Proposition~\ref{Bomega}.

\begin{remark}\label{reallife}
Although we started the presentation above by specifying $\wB$ and $\wx$,
to construct the cluster structure  $(\CC_{\text{CG}},\varphi_{\text{CG}})$ we had to act the other way around. 
We started with the extension of the Cremmer-Gervais bracket to $GL_3$ and tried to find a regular basis in
$\C(GL_3)$ in which this bracket is diagonal quadratic (that is, satisfies~\eqref{cpt}). 
Since $\det X$ is a Casimir  function for the extended bracket, it was included in the basis from the very beginning
as a stable variable.
Once such a basis is built, the exchange matrix of the cluster structure $\CC_{\text{CG}}^\circ$
on $GL_3$ is restored via 
Proposition~\ref{Bomega}. The cluster structure on $SL_3$ is obtained via restriction to the hypersurface $\det X=1$,
which amounts to removal of the corresponding column of the exchange matrix. 
\end{remark}

Let us proceed with the case of $SL_4$. Here we have, up to isomorphisms, the following possibilities:

Case 1: $\Gamma_1=\Gamma_2=\varnothing$ (standard R-matrices); 

Case 2: $\Gamma_1=\{\alpha_2, \alpha_3\}$, $\Gamma_2=\{\alpha_1, \alpha_2\}$, $\gamma(\alpha_2)=\alpha_1$,
$\gamma(\alpha_3)=\alpha_2$ (Cremmer-Gervais R-matrix);

Case 3: $\Gamma_1=\{\alpha_1\}$, $\Gamma_2=\{\alpha_3\}$, $\gamma(\alpha_1)=\alpha_3$;

Case 4: $\Gamma_1=\{\alpha_1\}$, $\Gamma_2=\{\alpha_2\}$, $\gamma(\alpha_1)=\alpha_2$.

The first case is covered by Theorem~\ref{triv}. In the remaining cases we proceed in accordance with 
Remark~\ref{reallife}.

{\it Case 2}. Here $k_T=1$, and hence the corresponding Belavin-Drinfeld class contains a unique R-matrix. It is called the
{\em Cremmer-Gervais R-matrix\/}, and the solution to~\eqref{r01},~\eqref{r02} is given by
$$
r_0 -\frac12 \mathfrak{t}_0= \frac14\left(e_{11}\wedge e_{44}-e_{11}\wedge e_{22}-e_{22}\wedge e_{33}-e_{33}\wedge e_{44}\right).
$$
The basis in $\C(SL_4)$ that makes the Cremmer-Gervais bracket diagonal quadratic is given by
\begin{align*}
&P_1= -x_{21}, \qquad P_2=x_{31},\qquad P_3=x_{24},\\ 
&P_4=\hat x_{31},\qquad P_5=\hat x_{24},\qquad P_6=\hat x_{34},\\
&P_7=\left|\begin{array}{cc} x_{11}& x_{14}\\ x_{21} & x_{24}\end{array}\right|,\qquad
P_8=\left|\begin{array}{cc} x_{21}& x_{24}\\ x_{31} & x_{34}\end{array}\right|,\\
&P_9=\left|\begin{array}{cc} x_{21}& x_{14}\\ x_{31} & x_{24}\end{array}\right|,\qquad
P_{10}=\left|\begin{array}{cc} x_{21}& x_{22}\\ x_{31} & x_{32}\end{array}\right|,\\
&P_{11}=-\left|\begin{array}{cc} \hat x_{31}& \hat x_{24}\\ \hat x_{41} & \hat x_{34}\end{array}\right|,\qquad
P_{12}=-\left|\begin{array}{ccc} x_{21}& x_{22} & x_{14}\\ x_{31} & x_{32} & x_{24}\\
x_{41} & x_{42} & x_{34} \end{array}\right|,\\
&P_{13}=\sum_{i=1}^3 \hat x_{i+1,1} \left|\begin{array}{cc} x_{1i}& x_{14}\\ x_{2i} & x_{24}\end{array}\right|,\qquad
P_{14}=-\sum_{i=1}^3 \hat x_{i4}\left|\begin{array}{ccc} x_{21}& x_{2, i+1} & x_{14}\\ x_{31} & x_{3,i+1} & x_{24}\\
x_{41} & x_{4,i+1} & x_{34} \end{array}\right|,\\
&P_{15}=-\sum_{i=1}^3 \hat x_{i+1,1}\left|\begin{array}{ccc} x_{21}& x_{1i} & x_{14}\\ x_{31} & x_{2i} & x_{24}\\
x_{41} & x_{3i} & x_{34} \end{array}\right|,
\end{align*}
where $X=(x_{ij})_{i,j=1}^4$ is a matrix in $SL_4$ and  $\widehat{X}=(\hat x_{ij})_{i,j=1}^4$ is its adjugate
matrix. The coefficient matrix of the Cremmer-Gervais bracket in this basis is given by
\begin{multline*}
4\Omega=\\
\left(\begin{array}{rrrrrrrrrrrrrrr}
 0 & -3 & -3 & -1 & -1 & 0 & 0 & -2 & -3  & 0  & -1 & -2 & -2 & -4 & -4 \\
 3 & 0 & 0 & 0 & 0 & 1 & 2 & 0 & -1  & 2  & 1 & -1 & 1 & -2 & -2 \\
3 & 0 & 0 & 0 & 0 &  1 & 2 & 0 & 3  & 2  & 1 & 3 & 1 & 2 & 2 \\
1 & 0 & 0 & 0 & 0 &  3 & 2 & 0 & 1  & 2  & 3 & 1 & 3 & 2 & 2 \\
1 & 0 & 0 & 0 & 0 & 3 &  2 & 0 & 1  & 2  & -1 & 1 & -1 & -2 & -2 \\
0 & -1 & -1 & -3 & -3 & 0 & 0 & -2 & -1  & 0  & -3 & -2 & -2 & -4 & -4 \\
0 & -2 & -2 & -2 & -2 & 0 & 0 & -4 & -2  & 0  & -2 & 0 & -4 & -4 & -4 \\
2 & 0 & 0 & 0 & 0 & 2 & 4 & 0 & 2  & 4  & 2 & 2 & 2 & 0 & 0 \\
3 & 1 & -3 & -1 & -1 & 1 & 2 & -2 & 0  & 2  & 0 & 1 & -1 & -2 & -2 \\
0 & -2 & -2 & -2 & -2 & 0 & 0 & -4 & -2  & 0  & -2 & -4 & 0 & -4 & -4 \\
1 & -1 & -1 & -3 & 1 & 3 & 2 & -2 & 0  & 2  & 0 & -1 & 1 & -2 & -2 \\
2 & 1 & -3 & -1 & -1 & 2 & 0 & -2 & -1  & 4  & 1 & 0 & 0 & 0 & 0 \\
2 & -1 & -1 & -3 & 1 & 2 & 4 & -2 & 1  & 0  & -1 & 0 & 0 & 0 & 0 \\
4 & 2 & -2 & -2 & 2 & 4 & 4 & 0 & 2  & 4  & 2 & 0 & 0 & 0 & 0 \\
4 & 2 & -2 & -2 & 2 & 4 & 4 & 0 & 2  & 4  & 2 & 0 & 0 & 0 & 0 
\end{array}\right).
\end{multline*}
A basis in $\C(GL_4)$ is obtained by adding $P_{16}=\det X$ to the above basis. Since $\det X$ is a Casimir function,
the corresponding coefficient matrix $\Omega^\circ$ is obtained from $\Omega$ by adding a zero column on the right 
and a zero row at the bottom.
By assertion (i) of Conjecture~\ref{ulti}, the cluster structure $\CC_{\text{CG}}$ we are looking for  
should have 2 stable variables; their  images under $\varphi_{\text{CG}}$ are 
polynomials $P_{14}$ and $P_{15}$; recall that $P_{16}$ is the image of the third stable variable that exists 
in $\CC_{\text{CG}}^\circ$, but not in $\CC_{\text{CG}}$. Therefore, the exchange matrix of $\CC_{\text{CG}}^\circ$ is a $13\times 16$ matrix. It is given by
\begin{multline*}
\wB_{\text{CG}}^\circ=\\
\left(
\begin{array}{rrrrrrrrrrrrrrrr}
 0 \!&\! 1 \!&\! 1 \!&\! 0 \!&\! 0 \!&\! 0 \!&\! 0 \!&\! -1 \!&\! 0  \!&\! 0  \!&\! 0 \!&\! 0 \!&\! 0 \!&\! 0 \!&\! 0 \!&\! 0 \\
-1 \!&\! 0 \!&\! -1 \!&\! 0 \!&\! 0 \!&\! 0 \!&\! 0 \!&\! 0 \!&\! 1  \!&\! 0  \!&\! 0 \!&\! 0 \!&\! 0 \!&\! 0 \!&\! 0 \!&\! 0 \\
-1 \!&\! 1 \!&\! 0 \!&\! 0 \!&\! 0 \!&\! 0 \!&\! 1 \!&\! 0 \!&\! -1  \!&\! 0  \!&\! 0 \!&\! 0 \!&\! 0 \!&\! 0 \!&\! 0 \!&\! 0 \\
 0 \!&\! 0 \!&\! 0 \!&\! 0 \!&\! 1 \!&\! -1 \!&\! 0 \!&\! 0 \!&\! 0  \!&\! 1  \!&\! -1 \!&\! 0 \!&\! 0 \!&\! 0 \!&\! 0 \!&\! 1 \\
 0 \!&\! 0 \!&\! 0 \!&\! -1 \!&\! 0 \!&\! -1 \!&\! 0 \!&\! 0 \!&\! 0  \!&\! 0  \!&\! 1 \!&\! 0 \!&\! 0 \!&\! 0 \!&\! 0 \!&\! 0 \\
 0 \!&\! 0 \!&\! 0 \!&\! 1 \!&\! 1 \!&\! 0 \!&\! 0 \!&\! -1 \!&\! 0  \!&\! 0  \!&\! 0 \!&\! 0 \!&\! 0 \!&\! 0 \!&\! 0 \!&\! -1 \\
0 \!&\! 0 \!&\! -1 \!&\! 0 \!&\! 0 \!&\! 0 \!&\! 0 \!&\! 1 \!&\! 0  \!&\! 0  \!&\! -1 \!&\! 0 \!&\! 1 \!&\! 0 \!&\! 0 \!&\! 0 \\
1 \!&\! 0 \!&\! 0 \!&\! 0 \!&\! 0 \!&\! 1 \!&\! -1 \!&\! 0 \!&\! 0  \!&\! -1  \!&\! 0 \!&\! 0 \!&\! 0 \!&\! 0 \!&\! 0 \!&\! 0 \\
 0 \!&\! -1 \!&\! 1 \!&\! 0 \!&\! 0 \!&\! 0 \!&\! 0 \!&\! 0 \!&\! 0  \!&\! 1  \!&\! 0 \!&\! -1 \!&\! -1 \!&\! 0 \!&\! 1 \!&\! 0 \\
0 \!&\! 0 \!&\! 0 \!&\! -1 \!&\! 0 \!&\! 0 \!&\! 0 \!&\! 1 \!&\! -1  \!&\! 0  \!&\! 0 \!&\! 1 \!&\! 0 \!&\! 0 \!&\! 0 \!&\! 0 \\
0 \!&\! 0 \!&\! 0 \!&\! 1 \!&\! -1 \!&\! 0 \!&\! 1 \!&\! 0 \!&\! 0  \!&\! 0  \!&\! 0 \!&\! -1 \!&\! -1 \!&\! 1 \!&\! 0 \!&\! 0 \\
0 \!&\! 0 \!&\! 0 \!&\! 0 \!&\! 0 \!&\! 0 \!&\! 0 \!&\! 0 \!&\! 1  \!&\! -1  \!&\! 1 \!&\! 0 \!&\! 0 \!&\! -1 \!&\! 0 \!&\! 0 \\
0 \!&\! 0 \!&\! 0 \!&\! 0 \!&\! 0 \!&\! 0 \!&\! -1 \!&\! 0 \!&\! 1  \!&\! 0  \!&\! 1 \!&\! 0 \!&\! 0 \!&\! 0 \!&\! -1 \!&\! 0
\end{array}\right).
\end{multline*}
 A direct check shows that $\wB_{\text{CG}}^\circ\Omega^\circ=(I\; 0)$, hence the
Cremmer-Gervais bracket is compatible with $\CC_{\text{CG}}^\circ$ by Proposition~\ref{Bomega}.
The exchange matrix $\wB_{\text{CG}}$ for $\CC_{\text{CG}}$ is obtained from  $\wB_{\text{CG}}^\circ$ by deletion of the
rightmost column. The compatibility is verified in the same way as before. 

Assertion (ii) is proved exactly as in the case of $SL_3$, with the help of Proposition~\ref{regfun}; a straightforward computation shows that all assumptions of this Propositions are valid.   

To prove assertion (iii), we parametrize the left and the right action of $\H_{\text{CG}}$ by $\diag(t^3,t,t^{-1},t^{-3})$
and $\diag(z^3,z,z^{-1},z^{-3})$, respectively. Then
 condition 1) of Remark~\ref{tam} holds with $1$-dimensional vectors $\eta_i$, $\zeta_i$ given by
\begin{align*}
\eta_1=1,\quad \eta_2=-1, \quad \eta_3=1, \quad &\eta_4=-3,\quad \eta_5=3, \quad \eta_6=3, \quad 
\eta_7=4,\quad \eta_8=0,\\
\eta_9=2,\quad \eta_{10}=0, \quad \eta_{11}=0, \quad &\eta_{12}=-1,\quad \eta_{13}=1, \quad \eta_{14}=2, \quad 
\eta_{15}=-2,\\
\zeta_1=3,\quad \zeta_2=3, \quad \zeta_3=-3, \quad &\zeta_4=1,\quad \zeta_5=-1, \quad \zeta_6=1, \quad 
\zeta_7=0,\quad \zeta_8=0,\\
\zeta_9=0,\quad \zeta_{10}=4, \quad \zeta_{11}=2, \quad &\zeta_{12}=1,\quad \zeta_{13}=-1, \quad \zeta_{14}=-2, \quad 
\zeta_{15}=2.
\end{align*}
Conditions 2) and 3) are now verified via direct computation.

{\it Case 3}. Here $k_T=2$, and hence the corresponding Belavin-Drinfeld class contains a 1-parameter
family of R-matrices. It is convenient to take the solution to~\eqref{r01},~\eqref{r02} given by
$$
r_0 -\frac12 \mathfrak{t}_0= \frac14 \left( e_{11}\wedge e_{22}-e_{22}\wedge e_{33}-e_{33}\wedge e_{44} + 2 e_{22}\wedge e_{44} - e_{11}\wedge e_{44} \right).
$$
The basis in $\C(SL_4)$ that makes the corresponding bracket diagonal quadratic is given by
\begin{align*}
&P_1= x_{12}, \quad P_2=x_{13},\quad P_3=x_{41},\quad P_4=-x_{42},\quad P_5=-\hat x_{12},\quad P_6=-\hat x_{13},\\
&P_7=\hat x_{41},\quad
P_8=\hat x_{42},\quad
P_9=-\left|\begin{array}{cc} x_{32}& x_{33}\\ x_{42} & x_{43}\end{array}\right|,\quad
P_{10}=\left|\begin{array}{cc} x_{13}& x_{14}\\ x_{43} & x_{44}\end{array}\right|,\\
&P_{11}=-\left|\begin{array}{cc}  x_{12}&  x_{13}\\  x_{42} &  x_{43}\end{array}\right|,\quad
P_{12}=\left|\begin{array}{cc}  x_{13}&  x_{14}\\  x_{23} &  x_{24}\end{array}\right|,\quad
P_{13}=\left|\begin{array}{cc} x_{31}& x_{32}\\ x_{41} & x_{42}\end{array}\right|,\\
&P_{14}=\left|\begin{array}{cc} x_{13}& x_{14}\\ x_{41} & x_{42}\end{array}\right|, \quad
P_{15}=\left|\begin{array}{cc} \hat x_{13}& \hat x_{14}\\ \hat x_{41} & \hat x_{42}\end{array}\right|.
\end{align*}
The coefficient matrix of the bracket in this basis is given by
\begin{multline*}
4\Omega=\\
\left(\begin{array}{rrrrrrrrrrrrrrr}
 0 & -1 & 0  & -3 & -2 & -1 & 0  & 1 & -2  & -4  & 0 & -2 & -2 & -4 & 0 \\
 1 & 0  & -1 & -2 & -1 & 0  & -3 & -2& -4  & 0   & -2 & 2 & -2 & -2 & -2 \\
0  & 1  & 0  & -3 & 0  & -3 & -2 & 1 & -2  & 0   & -2 & 0 & 0 & 2 & -2 \\
3  & 2  & 3  & 0  & 1  & -2 & 1  & 4 & 2   & -2  & 2 & -2 & 2 & 2 & 2 \\
2  & 1  & 0  & -1 & 0  & 1  & 0  & 3 & 0   & -2  & 0 & 2 & 2 & 0 & 4 \\
1  & 0  & 3  & 2  & -1 & 0  & 1  & 2 & 0   & 0   & 2 & -2 & 2 & 2 & 2 \\
0  & 3  & 2  & -1 & 0  & -1 & 0  & 3 & 0   & 2   & 2 & 0 & 0 & 2 & -2 \\
-1 & 2  & -1 & -4 & -3 & -2 & -3 & 0 & -2  & -2  & -2 & 2 & -2 & -2 & -2 \\
2  &  4 & 2  & -2 & 0  & 0  & 0  & 2 & 0   & 0   & 2 & 2 & 2 & 2 & 2 \\
4  & 0  & 0  & 2  & 2  & 0  & -2 & 2 & 0   & 0   & 2 & 2 & 2 & 2 & 2 \\
0  & 2  & 2  & -2 & 0  & -2 & -2 & 2 & -2  & -2  & 0 & 0 & 0 & 0 & 0 \\
2  & -2 & 0  & 2  & -2 &  2 & 0  & -2& -2  & -2  & 0 & 0 & 0 & 0 & 0 \\
2  &  2 & 0  & -2 & -2 & -2 & 0  & 2 & -2  & -2  & 0 & 0 & 0 & 0 & 0 \\
4  &  2 & -2 & -2 & 0  & -2 & -2 & 2 & -2  & -2  & 0 & 0 & 0 & 0 & 0 \\
0  &  2 & 2  & -2 & -4 & -2 & 2  & 2 & -2  & -2  & 0 & 0 & 0 & 0 & 0 
\end{array}\right).
\end{multline*}
A basis in $\C(GL_4)$ is obtained by adding $P_{16}=\det X$ to the above basis. Since $\det X$ is a Casimir function,
the corresponding coefficient matrix $\Omega^\circ$ is obtained from $\Omega$ by adding a zero column on the right 
and a zero row at the bottom.
By assertion (i) of Conjecture~\ref{ulti}, the cluster structure $\CC_{1\mapsto3}$ we are looking for  
should have 4 stable variables; their  images under $\varphi_{1\mapsto3}$ are 
polynomials $P_{12}$, $P_{13}$, $P_{14}$ and $P_{15}$; recall that $P_{16}$ is the image of the fifth stable variable that exists 
in $\CC_{1\mapsto3}^\circ$, but not in $\CC_{1\mapsto3}$. Therefore, the exchange matrix of 
$\CC_{1\mapsto3}^\circ$ is a $11\times 16$ matrix. It is given by
\begin{multline*}
\wB_{1\mapsto3}^\circ=\\
\left(
\begin{array}{rrrrrrrrrrrrrrrr}
 0 \!&\! 1 \!&\! 0 \!&\! 1 \!&\! 0 \!&\! 0 \!&\! 0 \!&\! 0  \!&\! 0  \!&\! 0  \!&\! -1 \!&\! 0 \!&\! 0 \!&\! 0 \!&\! 0 \!&\! 0 \\
-1 \!&\! 0 \!&\! -1 \!&\! 0 \!&\! 0 \!&\! 0 \!&\! 0 \!&\! 0 \!&\! 0  \!&\!-1  \!&\! 1 \!&\! 0 \!&\! 0 \!&\! 1 \!&\! 0 \!&\! 0 \\
0  \!&\! 1 \!&\! 0 \!&\! 1 \!&\! 0 \!&\! 0 \!&\! 0 \!&\! 0 \!&\!  0  \!&\! 0  \!&\! 0 \!&\! 0 \!&\! 0 \!&\! -1 \!&\! 0 \!&\! 0 \\
-1 \!&\! 0 \!&\!-1 \!&\! 0 \!&\! 0 \!&\! 0  \!&\! 0 \!&\! 0 \!&\! -1  \!&\! 0  \!&\! 1 \!&\! 0 \!&\! 1 \!&\! 0 \!&\! 0 \!&\! 0 \\
0 \!&\! 0 \!&\! 0 \!&\!  0 \!&\! 0 \!&\! -1 \!&\! 0 \!&\!-1 \!&\! 0  \!&\! 0  \!&\! 1 \!&\! 0 \!&\! 0 \!&\! 0 \!&\! 0 \!&\! 1 \\
 0 \!&\! 0 \!&\! 0 \!&\! 0 \!&\! 1 \!&\! 0 \!&\! 1 \!&\!  0 \!&\! 0  \!&\! -1 \!&\! 0 \!&\! 1 \!&\! 0 \!&\! 0 \!&\!-1 \!&\! 0 \\
 0 \!&\! 0 \!&\! 0  \!&\! 0 \!&\! 0 \!&\!-1 \!&\! 0 \!&\! -1 \!&\! 0  \!&\! 0  \!&\! 0 \!&\! 0 \!&\! 0 \!&\! 0 \!&\! 1 \!&\! 0 \\
0 \!&\! 0 \!&\! 0 \!&\! 0 \!&\! 1 \!&\! 0 \!&\! 1 \!&\! 0 \!&\! -1  \!&\!  0  \!&\! 0 \!&\! 0 \!&\! 0 \!&\! 0 \!&\! 0 \!&\! -1 \\
 0 \!&\! 0 \!&\! 0 \!&\! 1 \!&\! 0 \!&\! 0 \!&\! 0 \!&\! 1 \!&\! 0  \!&\! 0  \!&\! -1 \!&\! 0 \!&\! -1 \!&\! 0 \!&\! 0 \!&\! 0 \\
 0 \!&\! 1 \!&\! 0 \!&\! 0 \!&\! 0 \!&\! 1 \!&\! 0 \!&\! 0 \!&\! 0  \!&\! 0  \!&\! -1 \!&\! -1 \!&\! 0 \!&\! 0 \!&\! 0 \!&\! 0 \\
1 \!&\! -1 \!&\! 0 \!&\! -1 \!&\! -1 \!&\! 0 \!&\! 0 \!&\! 0 \!&\! 1  \!&\! 1  \!&\! 0 \!&\! 0 \!&\! 0 \!&\! 0 \!&\! 0 \!&\! 0 
\end{array}\right).
\end{multline*}
 A direct check shows that $\wB_{1\mapsto3}^\circ\Omega^\circ=(-I\; 0)$, hence the
 bracket defined above is compatible with $\CC_{1\mapsto3}^\circ$ by Proposition~\ref{Bomega}.
The exchange matrix $\wB_{1\mapsto3}$ for $\CC_{1\mapsto3}$ is obtained from  $\wB_{1\mapsto3}^\circ$ by deletion of the
rightmost column. The compatibility is verified in the same way as before. 

Assertion (ii) is proved exactly as in the previous case. 

To prove assertion (iii), we parametrize the left and the right action of $\H_{1\mapsto3}$ by $\diag(t,w,w^{-1},t^{-1})$
and $\diag(z,u,u^{-1},z^{-1})$, respectively. Then
 condition 1) of Remark~\ref{tam} holds with $2$-dimensional vectors $\eta_i$, $\zeta_i$ given by
\begin{align*}
&\eta_1=(1,0),\quad \eta_2=(1,0),\quad \eta_3=(-1,0), \quad \eta_4=(-1,0),\quad \eta_5=(0,-1), \\
&\eta_6=(0,1), \quad \eta_7=(-1,0),\quad \eta_8=(0,-1),\quad \eta_9=(-1,-1),\quad \eta_{10}=(0,0),\\  
&\eta_{11}=(0,0), \quad \eta_{12}=(1,1),\quad \eta_{13}=(-1,-1), \quad \eta_{14}=(0,0), \quad \eta_{15}=(0,0),\\
&\zeta_1=(0,1),\quad \zeta_2=(0,-1),\quad \zeta_3=(1,0), \quad \zeta_4=(0,1),\quad \zeta_5=(-1,0), \\
&\zeta_6=(-1,0), \quad \zeta_7=(1,0),\quad \zeta_8=(1,0),\quad \zeta_9=(0,0),\quad \zeta_{10}=(-1,-1),\\  
&\zeta_{11}=(0,0), \quad \zeta_{12}=(-1,-1),\quad \zeta_{13}=(1,1), \quad \zeta_{14}=(0,0), \quad \zeta_{15}=(0,0).
\end{align*}
Conditions 2) and 3) are now verified via direct computation.

{\it Case 4}. Here $k_T=2$, and hence the corresponding Belavin-Drinfeld class contains a 1-parameter
family of R-matrices. It is convenient to take the solution to~\eqref{r01},~\eqref{r02} given by
$$
r_0 -\frac12 \mathfrak{t}_0=\frac14 \left(e_{11}\wedge e_{22}+e_{22}\wedge e_{33}+e_{33}\wedge e_{44}-e_{11}\wedge e_{44}\right).
$$
The basis in $\C(SL_4)$ that makes the corresponding bracket diagonal quadratic is given by
\begin{align*}
&P_1= -x_{12}, \qquad P_2=x_{42},\qquad P_3=-x_{41},\\
& P_4=-\hat x_{41},\qquad P_5=\hat x_{42},\qquad P_6=-\hat x_{12},\\
&P_7=x_{12}x_{42}-x_{13}x_{41},\qquad
P_8=\left|\begin{array}{cc} x_{12}& x_{13}\\ x_{42} & x_{43}\end{array}\right|,\\
&P_9=\left|\begin{array}{cc} x_{11}& x_{12}\\ x_{41} & x_{42}\end{array}\right|,\qquad
P_{10}=\left|\begin{array}{cc} x_{12}& x_{13}\\ x_{32} & x_{33}\end{array}\right|,\\
&P_{11}=x_{41}\left|\begin{array}{cc}  x_{13}&  x_{14}\\  x_{33} &  x_{34}\end{array}\right|
-x_{42}\left|\begin{array}{cc}  x_{12}&  x_{14}\\  x_{32} &  x_{34}\end{array}\right|,\\
&P_{12}=x_{14},\qquad
P_{13}=\hat x_{14},\qquad
P_{14}=\left|\begin{array}{cc} x_{21}& x_{23}\\ x_{31} & x_{33}\end{array}\right|, \\
&P_{15}=\hat x_{41}\left(x_{41}\left|\begin{array}{cc} x_{13}& x_{14}\\ x_{23} &  x_{24}\end{array}\right|
-x_{42}\left|\begin{array}{cc} x_{12}& x_{14}\\ x_{22} &  x_{24}\end{array}\right|\right)+\\
&\qquad\qquad\qquad\qquad\hat x_{42}\left(x_{41}\left|\begin{array}{cc} x_{13}& x_{14}\\ x_{33} &  x_{34}\end{array}\right|
-x_{42}\left|\begin{array}{cc} x_{12}& x_{14}\\ x_{32} &  x_{34}\end{array}\right|\right).
\end{align*}
The coefficient matrix of the bracket in this basis is given by
\begin{multline*}
4\Omega=\\
\left(\begin{array}{rrrrrrrrrrrrrrr}
 0 & -3 & 0  & -2 & -1 & -2 & -3  & -2 & 0  & -1 & -3 & -2 & 0 & -2 & -4 \\
 3 & 0  & 3  & -1 & 0  & -1 &  3  & 0  & 2  & 1  &  2 & 1  & 1 &  0 &  2 \\
0  & -3 & 0  & -2 & -1 & -2 &  1  & -2 & 0  & -1 &  1 & 2  & 0 & -2 &  0 \\
2  & 1  & 2  & 0  & 3  &  0 &  3  &  2 & 4  & 1  &  1 &  0 &-2 &  2 &  0 \\
1  & 0  & 1  & -3 & 0  & -3 &  1  &  0 & 2  & -1 & -2 & -1 &-1 &  0 & -2 \\
2  & 1  & 2  & 0  &  3 & 0  &  3  &  2 & 4  & 1  &  1 &  0 & 2 &  2 &  4 \\
3  & -3 & -1 & -3 & -1 & -3 & 0   & -2 & 2  & 0  & -1 & -1 & 1 & -2 & -2 \\
2  & 0  &  2 & -2 &  0 & -2 &  2  & 0  &  4 & 2  &  0 & -2 & 2 &  0 &  0 \\
0  & -2 & 0  & -4 & -2 & -4 & -2  & -4 & 0  & -2 & -2 &  0 & 0 & -4 & -4 \\
1  & -1 & 1  & -1 & 1  & -1 &   0 & -2 & 2  & 0  & -3 & -3 &-1 &  2 & -2 \\
3  &-2  & -1 & -1 & 2  & -1 &  1  &  0 &  2 & 3  & 0  &  1 & 1 &  0 &  2 \\
2  & -1 & -2 &  0 & 1  & 0  &  1  &  2 & 0  & 3  & -1 & 0  & 2 & -2 &  0 \\
0  & -1 & 0  &  2 & 1  & -2 & -1  & -2 & 0  & 1  & -1 & -2 & 0 & 2  & 0 \\
2  &  0 &  2 & -2 & 0  & -2 &  2  &  0 & 4  & -2 & 0  & 2  & 0 &  0 & 0 \\
4  & -2 & 0  &  0 &  2 & -4 & 2   &  0 & 4  &  2 & -2 & 0  & 0 & 0  & 0 
\end{array}\right).
\end{multline*}
A basis in $\C(GL_4)$ is obtained by adding $P_{16}=\det X$ to the above basis. Since $\det X$ is a Casimir function,
the corresponding coefficient matrix $\Omega^\circ$ is obtained from $\Omega$ by adding a zero column on the right 
and a zero row at the bottom.
By assertion (i) of Conjecture~\ref{ulti}, the cluster structure $\CC_{1\mapsto2}$ we are looking for  
should have 4 stable variables; their  images under $\varphi_{1\mapsto2}$ are 
polynomials $P_{12}$, $P_{13}$, $P_{14}$ and $P_{15}$; recall that $P_{16}$ is the image of the fifth stable variable that exists 
in $\CC_{1\mapsto2}^\circ$, but not in $\CC_{1\mapsto2}$. Therefore, the exchange matrix of 
$\CC_{1\mapsto2}^\circ$ is a $11\times 16$ matrix. It is given by
\begin{multline*}
\wB_{1\mapsto2}^\circ=\\
\left(
\begin{array}{rrrrrrrrrrrrrrrr}
 0 \!&\! 1 \!&\! -1 \!&\! 0 \!&\! 0 \!&\! 0 \!&\! 1 \!&\! -1  \!&\! 0  \!&\! 0  \!&\! 0 \!&\! 0 \!&\! 0 \!&\! 0 \!&\! 0 \!&\! 0 \\
-1 \!&\! 0 \!&\! -1 \!&\! 0 \!&\! 0 \!&\! 0 \!&\! 0 \!&\! 0 \!&\! 1  \!&\! 0  \!&\! 0 \!&\! 0 \!&\! 0 \!&\! 0 \!&\! 0 \!&\! 0 \\
1  \!&\! 1 \!&\! 0 \!&\! 0 \!&\! 0 \!&\! 0 \!&\! -1 \!&\! 0 \!&\!  0  \!&\! 0  \!&\! 0 \!&\! 0 \!&\! 0 \!&\! 0 \!&\! 0 \!&\! 0 \\
0 \!&\! 0 \!&\! 0 \!&\! 0 \!&\! -1 \!&\! 0  \!&\! 0 \!&\! 0 \!&\! 0  \!&\! 0  \!&\! -1 \!&\! 0 \!&\! 0 \!&\! 0 \!&\! 1 \!&\! 0 \\
0 \!&\! 0 \!&\! 0 \!&\!  1 \!&\! 0 \!&\! 1 \!&\! 0 \!&\! -1 \!&\! 1  \!&\! 0  \!&\! 0 \!&\! 0 \!&\! 0 \!&\! -1 \!&\! 0 \!&\! -1 \\
 0 \!&\! 0 \!&\! 0 \!&\! 0 \!&\! -1 \!&\! 0 \!&\! 0 \!&\!  0 \!&\! 0  \!&\! 1 \!&\! 0 \!&\! 0 \!&\! -1 \!&\! 0 \!&\! 0 \!&\! 1 \\ 
-1 \!&\! 0 \!&\! 1  \!&\! 0 \!&\! 0 \!&\! 0 \!&\! 0 \!&\! 0 \!&\! 0  \!&\! 1  \!&\! -1 \!&\! 1 \!&\! 0 \!&\! 0 \!&\! 0 \!&\! 0 \\
1 \!&\! 0 \!&\! 0 \!&\! 0 \!&\! 1 \!&\! 0 \!&\! 0 \!&\! 0 \!&\! -1  \!&\! -1  \!&\! 0 \!&\! 0 \!&\! 0 \!&\! 0 \!&\! 0 \!&\!  0 \\
 0 \!&\! -1 \!&\! 0 \!&\! 0 \!&\! -1 \!&\! 0 \!&\! 0 \!&\! 1 \!&\! 0  \!&\! 0  \!&\! 0 \!&\! 0 \!&\! 0 \!&\! 1 \!&\! 0 \!&\! 0 \\
 0 \!&\! 0 \!&\! 0 \!&\! 0 \!&\! 0 \!&\! -1 \!&\! -1 \!&\! 1 \!&\! 0  \!&\! 0  \!&\! 1 \!&\! 0 \!&\! 0 \!&\! 0 \!&\! 0 \!&\! 0 \\
0 \!&\!  0 \!&\! 0 \!&\! 1 \!&\! 0 \!&\! 0 \!&\! 1  \!&\! 0 \!&\! 0  \!&\! -1  \!&\! 0 \!&\! 0 \!&\! 1 \!&\! 0 \!&\! -1 \!&\! 0 
\end{array}\right).
\end{multline*}
 A direct check shows that $\wB_{1\mapsto2}^\circ\Omega^\circ=(I\; 0)$, hence the
bracket defined above is compatible with $\CC_{1\mapsto2}^\circ$ by Proposition~\ref{Bomega}.
The exchange matrix $\wB_{1\mapsto2}$ for $\CC_{1\mapsto2}$ is obtained from  $\wB_{1\mapsto2}^\circ$ by deletion of the
rightmost column. The compatibility is verified in the same way as before. 

Assertion (ii) is proved exactly as in the previous case. 

To prove assertion (iii), we parametrize the left and the right action of $\H_{1\mapsto2}$ by $\diag(t,w,t^{-1}w^2,w^{-3})$
and $\diag(z,u,z^{-1}u^2,u^{-3})$, respectively. Then
 condition 1) of Remark~\ref{tam} holds with $2$-dimensional vectors $\eta_i$, $\zeta_i$ given by
\begin{align*}
&\eta_1=(1,0),\quad \eta_2=(0,-3),\quad \eta_3=(0,-3), \quad \eta_4=(-1,0),\quad \eta_5=(0,-1), \\
&\eta_6=(0,-1), \quad \eta_7=(1,-3),\quad \eta_8=(1,-3),\quad \eta_9=(1,-3),\quad \eta_{10}=(0,2),\\  
&\eta_{11}=(0,-1), \quad \eta_{12}=(1,0),\quad \eta_{13}=(0,3), \quad \eta_{14}=(-1,-1), \quad \eta_{15}=(0,-2),\\
&\zeta_1=(0,1),\quad \zeta_2=(0,1),\quad \zeta_3=(1,0), \quad \zeta_4=(0,3),\quad \zeta_5=(0,3), \\
&\zeta_6=(-1,0), \quad \zeta_7=(0,2),\quad \zeta_8=(-1,3),\quad \zeta_9=(1,1),\quad \zeta_{10}=(-1,3),\\  
&\zeta_{11}=(0,-1), \quad \zeta_{12}=(0,-3),\quad \zeta_{13}=(-1,0), \quad \zeta_{14}=(1,1), \quad \zeta_{15}=(0,2).
\end{align*}
Conditions 2) and 3) are now verified via direct computation.
\end{proof}

\section{The case of triangular Lie bialgebras}\label{SecTri}
We conclude with an example that shows that Conjecture \ref{ulti} is not
valid in the case
of skew-symmetric R-matrices, that is, R-matrices $r$ that satisfy
(\ref{rBD}) together with the condition
$$
r + r^{21}=0.
$$
Consider the simplest skew-symmetric R-matrix in $sl_2$:
$$
r=\left (\begin{array}{cc} 1& 0\\0 &-1\end{array}\right) \wedge \left
(\begin{array}{cc} 0& 1\\0 & 0\end{array}\right).
$$
Let $X=(x_{ij})_{i,j=1}^2$ denote an element of $SL_2$. Choose
functions
$y_1=x_{11}$, $y_2=x_{21}$, $y_3=x_{11}-x_{22}$ as coordinates on $SL_2$. Then a
direct calculation
using (\ref{sklya}) shows that the Poisson-Lie bracket corresponding to $r$
has the form
$$
\{y_1, y_2\} = y_2^2, \qquad \{y_1, y_3\} = y_2 y_3, \qquad \{y_2, y_3\} = 0.
$$
Select a new coordinate system on the open dense set $\{x_{21} \ne 0\}$:
$z_1=y_1$,  $z_2=-1/y_2$, $z_3= y_3/y_2$, then the Poisson algebra above
becomes:
$$
\{z_1, z_2\} = 1, \qquad \{z_1, z_3\} = \{z_2, z_3\} = 0.
$$
It is easy to see that both collections $z_1, z_2, z_3$ and $y_1, y_2, y_3$
generate
the set of rational functions on $SL_2$. However, we claim that there is no
triple
of independent rational functions $p_i(z_1,z_2,z_3)$ such that
$\{p_i,p_j\} = c_{ij} p_ip_j$ for some constants $c_{ij}$, $i,j=1,2,3$.
Indeed, the independence implies that at least one of the constants
$c_{ij}$, say, $c_{12}$,
is nonzero. View $p_1$ and $p_2$  as ratios of two polynomials in $z_1$
with the difference of degrees of the numerator and denominator equal to
$\delta_1$ and $\delta_2$, respectively. Then the difference
of degrees of the numerator and denominator of $\{p_1,p_2\} $ viewed as a
rational function of $z_1$ is at most $\delta_1 + \delta_2 -1$, and thus
$\{p_1,p_2\} $ cannot be a nonzero multiple of $p_1p_2$.
This means, in particular, that the Poisson structure associated with the
R-matrix above cannot be compatible with any cluster structure in the field
of rational functions on $SL_2$.

\section*{Acknowledgments}

M.~G.~was supported in part by NSF Grant DMS \#0801204. 
M.~S.~was supported in part by NSF Grants DMS \#0800671 and PHY \#0555346.  
A.~V.~was supported in part by ISF Grant \#1032/08. This paper was partially written during
the joint stay of all authors at MFO Oberwolfach in August 2010 within the framework
of Research in Pairs programme and the visit of the third author to  Institut des Hautes
\'Etudes Scientifiques in September--October 2010. We are grateful to these institutions for
warm hospitality and excellent working conditions.

 \end{document}